\newtheorem{thm}{Theorem}[section]
\newtheorem{lem}[thm]{Lemma}
\newtheorem{prop}[thm]{Proposition}
\newtheorem{conj}[thm]{Conjecture}
\theoremstyle{definition}
\newtheorem{ex}[thm]{Example}
\newtheorem{rem}[thm]{Remark}
\newtheorem{defn}[thm]{Definition}
\DeclareMathOperator{\h}{\mathbf{h}} 
\newcommand{\abs}[1]{\lvert#1\rvert}
\newcommand{\norm}[1]{\lVert#1\rVert}
\newcommand{\dpair}[2]{\langle#1, #2\rangle}
\newcommand{\R}{\mathbb{R}}
\newcommand{\N}{\mathbb{N}}  
\numberwithin{equation}{section}
\title{\bf Metric functionals and weak convergence}
\author[1]{Armando W. Guti\'{e}rrez\thanks{
	Supported by Horizon Europe (MOPO project, grant 101095998).}}
\affil[1]{\itshape\small VTT Technical 
Research Centre of Finland \protect\\ 
\emph{armando.w.gutierrez@vtt.fi}}
\author[2]{Olavi Nevanlinna}
\affil[2]{\itshape\small Department of 
Mathematics and Systems Analysis, 
Aalto University \protect\\ 
\emph{olavi.nevanlinna@aalto.fi}}
\date{}
\begin{document}

\maketitle

\begin{abstract}
We introduce a notion of weak convergence in arbitrary metric spaces. Metric 
functionals are key in our analysis: weak convergence of sequences in a given 
metric space is tested against all the metric functionals defined on said space. 
When restricted to bounded sequences in normed linear spaces, we prove that 
our notion of weak convergence agrees with the standard one.
\end{abstract}

\section{Introduction}

A natural question to ask is whether there exists a notion of weak convergence 
which may be valid in \emph{all} metric spaces. If such a concept exists, one may
naturally expect that it agrees with the standard weak convergence in all normed 
linear spaces. 

While investigating certain fixed point problems, T.-C.~Lim \cite{Lim1977}
introduced an interesting notion of convergence in metric spaces, the so-called 
$\Delta-$convergence. This concept, however, does not agree with the standard weak 
convergence in \emph{some} normed linear spaces. This is discussed in the end of
Section~\ref{sec:WC}. 

Lim's work has nevertheless motivated some investigations on weak topologies in 
{CAT}($0$) spaces. In these spaces, one can define metric projections on compact 
geodesics. This has led researchers to consider a notion of weak convergence 
that agrees with the standard one in Hilbert spaces. See the works 
\cite{Kirk2008}, \cite{Espinola-Fernandez2009},
\cite{Bacak2023}, and \cite{Lytchak-Petrunin2023}. 

In this note, we present a novel notion of weak convergence in metric spaces. Our 
approach uses metric functionals. 

\begin{defn}\label{def:WeakConv}
Let $(X,d)$ be a metric space. We say that a sequence of points $(x_n)_{n\geq 1}$ 
in $X$ converges \emph{$d-$weakly} to a point $z$ in $X$ if we have
\begin{equation}\label{eq:LiminfIneq}
	\liminf_{n\to\infty}\,\h(x_n) \geq \h(z),
\end{equation}
for all metric functionals $\h$ on $X$. We use the notation 
$x_n \xrightharpoonup{\;d\;} z$.
\end{defn}

Our concept of $d-$weak convergence can be applied to nets. We discuss here
$d-$weak convergence of sequences only to simplify the exposition of our ideas.
 
We also want to emphasize that $d-$weak convergence fully depends on the given 
metric $d$. The metric $d$ determines all the corresponding metric 
functionals $\h$ that are essential to test the inequality (\ref{eq:LiminfIneq}). 

The concept of a metric functional is fairly known in some mathematical 
disciplines. Nonetheless, we recall such a concept and provide new properties in 
Section~\ref{sec:MF}.

We proceed now to provide evidence that supports our choice of 
Definition~\ref{def:WeakConv} as a suitable candidate for weak convergence in 
metric spaces. The following nontrivial statements concern $d-$weak convergence
of sequences in normed linear spaces.

\begin{thm}\label{thm:unique}
Let $(X,\norm{\cdot})$ be a normed linear space. Let $d$ be the metric on $X$ 
induced by the norm $\norm{\cdot}$. If a sequence of points $(x_n)_{n\geq 1}$ 
in $X$ converges $d-$weakly, then its limit is unique.
\end{thm}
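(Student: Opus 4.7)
My plan is to argue by contradiction. Suppose $(x_n)_{n \geq 1}$ converges $d$-weakly to two distinct points $z, w \in X$; I will exhibit a metric functional whose values are incompatible with \eqref{eq:LiminfIneq} holding for both limits. By Hahn--Banach, fix $\phi \in X^*$ with $\norm{\phi}_{*} = 1$ and $\phi(w - z) = \norm{w - z} > 0$. The strategy is to produce two metric functionals, one equal to $+\phi$ and one equal to $-\phi$. Applying \eqref{eq:LiminfIneq} to $-\phi$ with both candidate limits gives $\limsup_n \phi(x_n) \leq \min(\phi(z), \phi(w))$, while applying it to $+\phi$ gives $\liminf_n \phi(x_n) \geq \max(\phi(z), \phi(w))$; these bounds force $\phi(z) = \phi(w)$, contradicting $\phi(w - z) = \norm{w - z} > 0$.

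Realising $\pm\phi$ as metric functionals is the heart of the matter. The internal metric functionals $\h_y(x) = \norm{x - y} - \norm{y}$ alone do not suffice: they yield only $\liminf_n \norm{x_n - y} \geq \max(\norm{z - y}, \norm{w - y})$ for every $y \in X$, which can be satisfied by non-convergent sequences (think of a Hilbert-space sequence oscillating on the affine midplane between $z$ and $w$). Boundary metric functionals, i.e.\ pointwise limits of $\h_{y_n}$ with $\norm{y_n} \to \infty$, are needed. For a unit vector $u$ at which the norm is G\^ateaux differentiable with derivative $\phi_u \in X^*$, a direct computation shows $\h_{nu}(x) = \norm{x - nu} - n \to -\phi_u(x)$ and $\h_{-nu}(x) \to \phi_u(x)$ pointwise as $n \to \infty$; hence both $\pm\phi_u$ arise as metric functionals on $(X, d)$.

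The principal obstacle is to arrange a smooth unit vector $u$ whose associated derivative $\phi_u$ separates $z$ and $w$, i.e.\ satisfies $\phi_u(w - z) > 0$. When $(X, \norm{\cdot})$ is smooth one simply takes $u = (w - z)/\norm{w - z}$. In general one invokes Mazur's theorem (or Bishop--Phelps), which provides a dense set of smooth unit vectors on the sphere, and then by a small perturbation of $(w - z)/\norm{w - z}$ selects such a $u$ with $\phi_u(w - z) > 0$. I expect the technical packaging of this last step to lean on the structural results on metric functionals established in Section~\ref{sec:MF}.
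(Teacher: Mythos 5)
Your overall strategy --- produce a pair of metric functionals of the form $\pm\phi$ with $\phi\in X^*$ separating the two candidate limits, so that the two liminf inequalities squeeze $\lim_{n}\phi(x_n)$ into equalling both $\phi(z)$ and $\phi(w)$ --- is sound, and the computation $\h_{nu}\to-\phi_u$ at a point $u$ of G\^ateaux smoothness is correct. The gap is in the supply of such functionals. The theorem is stated for an \emph{arbitrary} normed linear space, while Mazur's theorem only guarantees a dense set of G\^ateaux smooth points when $X$ is a separable Banach space (it is a Baire category argument in $X$). There are Banach spaces whose norm is nowhere G\^ateaux differentiable: for instance $\ell_1(\Gamma)$ with $\Gamma$ uncountable (every element has countable support, whereas smoothness of the $\ell_1$-norm at $x$ requires every coordinate of $x$ to be nonzero), or $L_\infty[0,1]$. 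For such spaces your construction produces no linear metric functionals at all, so the argument collapses precisely where the theorem still asserts uniqueness. Bishop--Phelps is also not the right tool here: it gives density of norm-attaining functionals in $X^*$, not density of smooth points of the unit sphere. One could try to salvage the separable case and then pass to the closed linear span of $\{x_n\}\cup\{z,w\}$, but a smooth point $u$ of that subspace only yields pointwise convergence of $\h_{nu}$ on the subspace, and you would still need a compactness argument in $X^\diamondsuit$ to produce a metric functional defined on all of $X$; none of this is in your write-up.

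The paper obtains the needed sign-symmetric family of linear metric functionals by a different route: by a result of Walsh, every extreme point $g$ of the closed unit ball $B_{X^*}$ is (up to the sign convention $x\mapsto\dpair{-x}{g}$) a metric functional on $X$, and the set of extreme points is invariant under $g\mapsto-g$. Testing the two liminf inequalities against $g$ and $-g$ gives $\lim_{n}\dpair{x_n}{g}=\dpair{u}{g}=\dpair{v}{g}$ for every extreme point, and the Krein--Milman theorem, applied to the weak-star compact convex set $B_{X^*}$, upgrades $\dpair{u-v}{g}=0$ from extreme points to all of $X^*$, whence $u=v$. If you wish to keep your smoothness-based construction, you must either restrict the statement to spaces with a dense set of G\^ateaux smooth points or replace the appeal to Mazur by Walsh's extreme-point result.
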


The following result shows the connection of $d-$weak convergence with the standard 
weak convergence in normed linear spaces. Namely, they agree for bounded 
sequences in \emph{all} normed linear spaces.

\begin{thm}\label{thm:dWeakANDstWeak}
Let $(X,\norm{\cdot})$ be a normed linear space. Let $d$ be the metric on $X$ 
induced by the norm $\norm{\cdot}$. Suppose that $(x_n)_{n\geq 1}$ is a bounded 
sequence in $X$ and $z$ is a point in $X$. Then, $(x_n)_{n\geq 1}$ converges 
$d-$weakly to $z$ if and only if $(x_n)_{n\geq 1}$ converges weakly to $z$.
\end{thm}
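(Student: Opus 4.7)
The plan is to prove both implications. The $(\Leftarrow)$ direction follows from convexity and continuity of metric functionals and does not require boundedness. The $(\Rightarrow)$ direction is more delicate and is where boundedness enters essentially.

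For the $(\Leftarrow)$ direction I would observe that every metric functional $\h$ is a pointwise limit of maps of the form $h_y(x) = \|x - y\| - \|y\|$, each of which is convex and $1$-Lipschitz in $x$. Consequently $\h$ itself is convex and $1$-Lipschitz, and so in particular norm-continuous. By the usual Hahn--Banach-based result that norm-closed convex subsets of a normed linear space are weakly closed, $\h$ is then weakly sequentially lower semicontinuous. Hence $x_n \to z$ weakly forces $\liminf_n \h(x_n) \geq \h(z)$ for every metric functional $\h$, and $d$-weak convergence follows directly from Definition~\ref{def:WeakConv}.

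For the $(\Rightarrow)$ direction I would argue by contradiction. Supposing $(x_n)$ bounded, $x_n \xrightharpoonup{\;d\;} z$, yet $x_n$ not converging weakly to $z$, one extracts a subsequence and a unit-norm $f \in X^*$ with $\operatorname{Re} f(x_n) \geq \operatorname{Re} f(z) + \varepsilon$ for every $n$, after possibly replacing $f$ by $-f$. The aim is to manufacture a metric functional $\h$ with $\liminf_n \h(x_n) < \h(z)$. The natural candidate is a Busemann-type limit: pick a unit vector $v \in X$ with $f(v)$ close to $1$, form the metric functionals $h_{Tv}(x) = \|x - Tv\| - T$, and take a pointwise limit $\h$ as $T \to \infty$ along a subnet. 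The Hahn--Banach estimate $\|x - Tv\| \geq T\operatorname{Re}f(v) - \operatorname{Re}f(x)$ yields $\h(x) \geq -\operatorname{Re}f(x)$ at once; the delicate step is to secure the matching upper bound on the bounded set $\{z\}\cup\{x_n\}_{n\geq 1}$.

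This upper bound is the main obstacle, since in a general normed space the Busemann limit can strictly exceed $-\operatorname{Re}f$ (already visible in $\ell^1$, where the limit acquires an extra term in the coordinates where $v$ vanishes). To circumvent this I would localize to the separable closed subspace $Y = \overline{\operatorname{span}}(\{z\}\cup\{x_n\}_{n\geq 1})$, apply Bishop--Phelps inside $Y$ to replace $f|_Y$ by a nearby norm-attaining functional that still separates the $x_n$ from $z$ by $\varepsilon/2$, and invoke Mazur's theorem to find a unit $v \in Y$ at which the norm of $X$ is Gateaux differentiable with derivative precisely this norm-attaining functional. A supporting-functional computation along the Busemann ray $Tv$ then gives $\h(x_n) - \h(z) \leq -\varepsilon/2$ for every $n$, producing the required contradiction. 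Boundedness of $(x_n)$ is essential both for the separable reduction to capture all points of interest and for the supporting-functional estimates to remain uniform as $T \to \infty$.
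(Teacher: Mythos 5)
Your $(\Leftarrow)$ direction is sound and is essentially the paper's argument in different packaging: the paper shows that a metric functional, being convex (Proposition~\ref{prop:ConvexMF}) and $1$-Lipschitz, has closed convex sublevel sets and then runs the separation theorem directly, which is exactly the proof of the weak sequential lower semicontinuity you invoke. The problem is in the $(\Rightarrow)$ direction.

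The gap is the step ``invoke Mazur's theorem to find a unit $v\in Y$ at which the norm is Gateaux differentiable with derivative precisely this norm-attaining functional.'' Mazur's theorem gives a dense $G_\delta$ of smooth points in a separable space, but it gives you no control over \emph{which} functionals occur as derivatives there. By the formula $\h^u(x)=\max_{f\in\partial\norm{u}}\dpair{-x}{f}$, a Busemann functional is linear exactly when $\partial\norm{u}$ is a singleton, and the functionals so obtained are precisely the $w^*$-exposed points of $B_{X^*}$. This set can be very thin: in $c_0$ the only derivatives realized at smooth points are $\pm e_k\in\ell_1$, whereas Bishop--Phelps will hand you norm-attaining functionals such as $\tfrac12(e_1+e_2)$ that are not the derivative of the norm at any point (they are not even extreme in $B_{\ell_1}$). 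Norm attainment of $g$ at $v$ only says $g\in\partial\norm{v}$, not that $\partial\norm{v}=\{g\}$; when $\partial\norm{v}$ is larger, your Busemann functional is the maximum of $\dpair{-\cdot}{f}$ over the whole subdifferential and the matching upper bound $\h(x_n)\leq -\mathrm{Re}\,f(x_n)+\varepsilon/2$ can fail. Moreover, even if one grants pointwise convergence against every realizable derivative, i.e.\ against every $w^*$-exposed point of the dual ball, that set need not be a James boundary (already in two dimensions a face of the dual ball can contain no exposed point), so no Rainwater/Simons-type theorem applies to it and weak convergence does not follow.

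The paper avoids this entirely by not restricting to Busemann functionals: Walsh's result (\cite[Corollary~3.5]{Walsh2018}) shows that every \emph{extreme} point $a^*$ of $B_{X^*}$ yields a metric functional $x\mapsto\dpair{-x}{a^*}$ (arising as a limit of internals along a general net, not along a ray). Testing $d$-weak convergence against $\pm a^*$ gives $\dpair{x_n}{a^*}\to\dpair{z}{a^*}$ for all extreme $a^*$, and Rainwater's theorem --- which genuinely requires extreme points and boundedness --- upgrades this to weak convergence. To repair your argument you would need to replace the Bishop--Phelps/Mazur step by this (or an equivalent) supply of linear metric functionals covering all extreme points of $B_{X^*}$.
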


One may wonder whether unbounded sequences in normed linear spaces converge 
$d-$weakly. We can prove that this does not happen in some normed linear spaces.

\begin{thm}\label{thm:UnbNoWeak}
Suppose that $X$ is $\ell_1$, or $C[0,1]$, or a normed linear space whose dual 
is strictly convex. If $d$ is the corresponding induced metric, then unbounded 
sequences in $X$ do not converge $d-$weakly.
\end{thm}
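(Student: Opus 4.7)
The approach is by contradiction. Suppose $(x_n)_{n \geq 1} \subset X$ is unbounded yet $x_n \xrightharpoonup{\;d\;} z$ for some $z \in X$; in each of the three cases we produce a metric functional $\h$ for which (\ref{eq:LiminfIneq}) fails along the sequence. The unifying idea is that each listed space admits a sufficiently rich family of metric functionals of the form $-\ell$, with $\ell$ linear (or a rigid nonlinear analogue in the $C[0,1]$ case), so that $d$-weak convergence — combined with a Banach--Steinhaus or Baire category argument — forces $\sup_n \norm{x_n} < \infty$.

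For a normed space whose dual is strictly convex, G\^ateaux smoothness of the norm implies that every $u \in S_X$ admits a unique supporting functional $f_u \in S_{X^*}$, and the ray $y_k = k u$ yields the pointwise limit
\[
\h_u(x) = \lim_{k \to \infty}\bigl(\norm{x - k u} - k\bigr) = -f_u(x),
\]
so $\h_u$ is a metric functional; the analogous computation for the ray $-u$ makes $f_u$ a metric functional as well. Applying (\ref{eq:LiminfIneq}) to both gives $f_u(x_n) \to f_u(z)$ for every unit vector $u$. Combining the Bishop--Phelps density of norm-attaining unit functionals with the uniform boundedness principle (viewing $(x_n) \subset X \subseteq X^{**}$) then forces $\sup_n \norm{x_n} < \infty$, contradicting unboundedness.

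For $X = \ell_1$, I take for each $\epsilon \in \{-1,+1\}^{\N}$ the sequence $y^{(n)} \in \ell_1$ with $y^{(n)}_i = n \epsilon_i$ for $i \le n$ and $y^{(n)}_i = 0$ otherwise. Dominated convergence gives $h_{y^{(n)}}(x) \to -\sum_i \epsilon_i x_i$ pointwise in $x$, so $\h_\epsilon(x) := -\sum_i \epsilon_i x_i$ is a metric functional. Using (\ref{eq:LiminfIneq}) for both $\h_\epsilon$ and $\h_{-\epsilon}$ yields $\sum_i \epsilon_i x_n^{(i)} \to \sum_i \epsilon_i z_i$ for every $\epsilon$. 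The maps $T_n(\epsilon) = \sum_i \epsilon_i x_n^{(i)}$ are continuous on the compact Cantor space $\{-1,+1\}^{\N}$; Baire category produces a basic cylinder on which $\sup_n |T_n|$ is uniformly bounded. Toggling signs at coordinates beyond the cylinder extracts a uniform tail bound $\sum_{i>N}|x_n^{(i)}| \le M$ together with uniform estimates $\lvert\sum_{i \le N} \delta_i x_n^{(i)}\rvert \le M$ for every $\delta \in \{-1,+1\}^{N}$, which force $\sup_n \norm{x_n}_1 < \infty$. For $X = C[0,1]$, point evaluations $\h(x) = \pm x(t)$ are metric functionals for each $t \in [0,1]$ (from $y_k = \pm k \phi_t$, $\phi_t$ a continuous bump with unique maximum $1$ at $t$), and for every closed $S \subseteq [0,1]$ the functionals $-\min_{s \in S} x(s)$ and $\max_{s \in S} x(s)$ are metric functionals (from $y_k = \pm k \phi_S$, $\phi_S$ with maximum set $S$). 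These yield pointwise convergence $x_n(t) \to z(t)$ and sharp one-sided control of extrema; combined with compactness of $[0,1]$ — pass to a subsequence along which $\lvert x_n(s_n)\rvert \to \infty$ with $s_n \to s_\infty$ — one builds, via a diagonal choice of $y_k$ tracking the concentration, the metric functional witnessing the failure of (\ref{eq:LiminfIneq}).

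The $C[0,1]$ case is the main obstacle I expect. The basic point-evaluation and extremal metric functionals above do not by themselves rule out concentrated "thin-spike" sequences whose supports retract from any fixed point, so the witnessing metric functional must be constructed by a more delicate diagonal extraction adapted to the specific concentration pattern of $(x_n)$. A secondary technical point is the strictly-convex-dual case when $X$ is non-reflexive, where not every unit functional attains its norm; the Bishop--Phelps density has to be leveraged together with G\^ateaux smoothness in a careful way to obtain the required uniform boundedness.
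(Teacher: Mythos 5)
Your $\ell_1$ argument is complete and correct, and it is a genuine alternative to the paper's route: the paper deduces the $\ell_1$ case from Proposition~\ref{prop:dWeakStrongl1} (proved via the gliding-hump technique), whereas your sign functionals $x\mapsto-\sum_i\epsilon_i x_i$ are indeed among the metric functionals of \cite[Theorem~3.6]{Gutierrez2019-2}, and the Baire-category argument on the Cantor group $\{-1,1\}^{\N}$ delivers the uniform tail bound. The other two cases, however, contain genuine gaps --- both of which you flag yourself, but neither of which is repaired. In the strictly-convex-dual case, your Busemann functionals yield $f(x_n)\to f(z)$ only for \emph{norm-attaining} unit functionals $f$ (these are exactly the functionals of the form $-\h^u$). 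Bishop--Phelps gives density of this set in the unit sphere of $X^*$, but the uniform boundedness principle requires pointwise boundedness on a non-meager set, and a dense set --- in particular the set of norm-attaining functionals of a non-reflexive space --- can be meager; nor is it automatic that its linear span is all of $X^*$. So the step ``density plus UBP forces $\sup_n\norm{x_n}<\infty$'' does not close. The paper sidesteps this entirely by invoking Walsh's result \cite[Corollary~3.5]{Walsh2018} that \emph{every extreme point} of $B_{X^*}$ is a metric functional; strict convexity of $X^*$ makes every norm-one functional extreme, hence $f(x_n)\to f(z)$ for \emph{all} $f$ of norm one, i.e.\ genuine weak convergence, and then UBP applies with no density issue. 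That is the ingredient missing from your sketch.

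For $C[0,1]$ you correctly identify the obstruction --- point evaluations (which are indeed metric functionals) only rule out pointwise-unbounded sequences, while a sequence of thin spikes of height $n$ whose supports slide away from every fixed point is norm-unbounded yet pointwise bounded --- but the ``delicate diagonal extraction'' that is supposed to produce the witnessing metric functional is never carried out, and it is the entire content of this case. (For what it is worth, the paper's own proof asserts that an unbounded sequence in $C[0,1]$ satisfies $\abs{f_{n_i}(\tau)}\to\infty$ at some fixed $\tau$, which is precisely the claim your thin-spike scenario calls into question; so you have located a real difficulty rather than a spurious one, but locating it is not resolving it.) As written, only one of the three cases of the theorem is actually proved.
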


\begin{conj}
There is no normed linear space where unbounded sequences converge $d-$weakly.
\end{conj}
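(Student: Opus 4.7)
The plan is to argue by contradiction: suppose $(x_n)_{n\geq 1}$ is unbounded in a normed linear space $X$ and $x_n \xrightharpoonup{\;d\;} z$. Since, in a normed space, the map $\h \mapsto \h(\cdot + z) - \h(z)$ sends metric functionals to metric functionals, one may translate to reduce to $z = 0$; by passing to a subsequence we further assume $\norm{x_n} \to \infty$. The task then becomes exhibiting a single metric functional $\h$ with $\liminf_n \h(x_n) < 0 = \h(0)$.

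A natural supply of candidate metric functionals comes from Busemann-type limits. For each unit vector $u \in X$, the function
\[
    \h_u(y) := \lim_{t \to \infty}\bigl(\norm{y - tu} - t\bigr)
\]
is well defined: the integrand is non-increasing in $t$ by the triangle inequality and bounded below by $-\norm{y}$. Each $\h_u$ is a metric functional, being the pointwise limit of the internal functionals $y \mapsto \norm{y - tu} - t$, and satisfies $\h_u(su) = -s$ for every $s \in \R$. Writing $u_n := x_n/\norm{x_n}$, one has $\h_{u_n}(x_n) = -\norm{x_n} \to -\infty$, but each $\h_{u_n}$ is a different functional; what is needed is a single $\h$ witnessing the failure of (\ref{eq:LiminfIneq}) along the whole sequence.

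The proposed route is: use compactness of the metric compactification to extract a subnet $\h_{u_{n_\alpha}} \to \h$; combine with Hahn--Banach, choosing $f_n \in X^*$ with $\norm{f_n} = 1$ and $f_n(x_n) = \norm{x_n}$, to supply dual information; and deduce that $\h(x_{n_\alpha}) \to -\infty$. Equivalently, one would like to show that $f(x_n) \to 0$ for every $f \in X^*$, so that $(x_n)$ converges weakly to $0$ in the classical sense; the Banach--Steinhaus theorem then forces boundedness, contradicting $\norm{x_n} \to \infty$. In favourable cases this can be arranged via $\h_u$ together with $\h_{-u}$ when the norm is Gateaux differentiable at $u$, because then $-f$ and $+f$ both arise as metric functionals (with $f$ the derivative at $u$), sandwiching $f(x_n) \to 0$.

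The principal obstacle is the gap between pointwise convergence of metric functionals and diagonal evaluation at the varying points $x_n$: the identity $\h_{u_n}(x_n) = -\norm{x_n}$ does not transfer to any single limit $\h$. In non-smooth spaces such as $c_0$ or $\ell_\infty$, neither of which is covered by Theorem~\ref{thm:UnbNoWeak}, Busemann functions are sublinear rather than linear, and every naive candidate $\h$ for a sequence like $x_n = n e_n$ evaluates to $0$, since a single metric functional can only concentrate at finitely many coordinates while the support of the sequence drifts to infinity. A successful proof of the conjecture will likely require a structural classification of horofunctions in arbitrary normed spaces, refining the ad hoc descriptions used in Theorem~\ref{thm:UnbNoWeak}, or, failing that, the construction of an explicit counterexample in one of these borderline spaces.
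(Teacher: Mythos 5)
There is a genuine gap here, and you have in fact diagnosed it yourself: the statement you were asked to prove is stated in the paper as a \emph{conjecture}, and the paper offers no proof of it. Your proposal does not close the argument either. The decisive missing step is exactly the one you flag: the identity $\h_{u_n}(x_n) = -\norm{x_n}$ involves a different Busemann functional for each $n$, and passing to a subnet limit $\h$ of the $\h_{u_n}$ in $X^\diamondsuit$ gives only pointwise control, i.e.\ $\h(y) = \lim_\alpha \h_{u_{n_\alpha}}(y)$ for each \emph{fixed} $y$; it gives no information about the diagonal quantities $\h(x_{n_\alpha})$. Without an equicontinuity- or uniformity-type statement (which is unavailable here, since the $1$-Lipschitz bound only controls differences $\abs{\h(x)-\h(y)}$ by $d(x,y)$, useless when $\norm{x_n}\to\infty$), the compactness extraction produces a functional that may well vanish along the sequence, as your $c_0$ example with $x_n = ne_n$ illustrates. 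So the contradiction is never reached.

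Two smaller points. First, the parts of your strategy that do work are essentially the ones the paper already exploits in Theorem~\ref{thm:UnbNoWeak}: when the norm is smooth enough that both $f$ and $-f$ arise as metric functionals (e.g.\ when $X^*$ is strictly convex, where all norm-one functionals are extreme points of $B_{X^*}$ and hence metric functionals by Walsh's result), one sandwiches $f(x_n)\to f(z)$, concludes classical weak convergence, and invokes uniform boundedness. Your Gateaux-differentiability remark is a variant of this, and it does not extend to the non-smooth borderline spaces. Second, the reduction to $z=0$ is harmless (it amounts to the basepoint-independence proposition in Section~\ref{sec:WC} together with translation of internals), but it buys nothing toward the main difficulty. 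What you have written is a reasonable research plan with an honest assessment of the obstruction, not a proof; as far as the paper is concerned, the question remains open.
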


Without a proof of the above conjecture, it is unclear how $d-$weak convergence 
behaves on linear combinations of two $d-$weakly convergent sequences. 
Nevertheless, we can offer the following.

\begin{thm}\label{thm:dWeakOfLinComb}
Let $(X,\norm{\cdot})$ be a normed linear space. Let $d$ be the metric on $X$ 
induced by the norm $\norm{\cdot}$. Suppose that $(x_n)_{n\geq 1}$ and 
$(y_n)_{n\geq 1}$ are two sequences in $X$. If there are two vectors $u$ and $v$ 
in $X$ such that $x_n \xrightharpoonup{\;d\;} u$ and  $d(y_n,v)\rightarrow 0$, 
then we have
$$
	sx_n + ty_n \xrightharpoonup{\;d\;} su+ tv,
$$
for all real numbers $s$ and $t$. 
\end{thm}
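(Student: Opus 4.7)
My plan is to exploit two properties of metric functionals in normed linear spaces: each is $1$-Lipschitz with respect to $d$ (inherited from the elementary functionals $\h_z(\cdot) = d(\cdot, z) - d(x_0, z)$, which are Lipschitz by the triangle inequality), and each transforms predictably under the affine substitution $x \mapsto sx + tv$. The first property absorbs the perturbation coming from $y_n$: for every metric functional $\h$, the $1$-Lipschitz bound gives
$$\abs{\h(sx_n + ty_n) - \h(sx_n + tv)} \leq |t|\,\norm{y_n - v} \longrightarrow 0,$$
so it suffices to prove $\liminf_{n\to\infty}\h(sx_n + tv) \geq \h(su + tv)$. The case $s = 0$ is immediate, and I would then assume $s \neq 0$ and fix $x_0 = 0$ as basepoint.

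The central step is to identify the function $x \mapsto \h(sx + tv)$ as a scalar multiple of a metric functional plus a constant. Writing $\h = \lim_\alpha \h_{z_\alpha}$ pointwise and setting $z_\alpha' := (z_\alpha - tv)/s$, a short computation gives
$$\h_{z_\alpha}(sx + tv) = |s|\,\h_{z_\alpha'}(x) + C_\alpha, \qquad C_\alpha := |s|\,\norm{z_\alpha'} - \norm{z_\alpha}.$$
The scalars $C_\alpha$ are bounded by $|t|\,\norm{v}$ (triangle inequality applied to $\norm{z_\alpha} = \norm{sz_\alpha' + tv}$), and the normalized $1$-Lipschitz functions $\h_{z_\alpha'}$ lie in a compact subset of $\R^X$ under the product topology. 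A joint subnet therefore produces $C_\alpha \to C \in \R$ and $\h_{z_\alpha'} \to \tilde\h$ pointwise, with $\tilde\h$ itself a metric functional on $(X, d)$. Passing to the limit yields the identity
$$\h(sx + tv) = |s|\,\tilde\h(x) + C, \qquad x \in X.$$

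To finish, I would apply the hypothesis $x_n \xrightharpoonup{\;d\;} u$ against the metric functional $\tilde\h$, multiply by $|s| > 0$, and add $C$:
$$\liminf_{n\to\infty}\h(sx_n + tv) = |s|\liminf_{n\to\infty}\tilde\h(x_n) + C \geq |s|\,\tilde\h(u) + C = \h(su + tv),$$
which combined with the first paragraph completes the proof. I expect the main obstacle to be the decomposition step: one must verify that both convergences $C_\alpha \to C$ and $\h_{z_\alpha'} \to \tilde\h$ can be realized along a common subnet, and that the resulting $\tilde\h$ genuinely qualifies as a metric functional in the paper's sense rather than merely being a $1$-Lipschitz function vanishing at the basepoint.
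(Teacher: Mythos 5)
Your proposal is correct and follows essentially the same route as the paper: the paper's Lemma~\ref{lem:MFcomb} performs exactly your substitution $z'_\alpha=(z_\alpha-tv)/s$ to write $\h(sx+tv)=\abs{s}\mbox{\boldmath$\eta$}(x)+\h(tv)$, and then handles $y_n\to v$ via the $1$-Lipschitz property. The only (cosmetic) difference is that the paper avoids your subnet extraction by noting the constant term is $\norm{tv-w_\alpha}-\norm{w_\alpha}=\h_{w_\alpha}(tv)\to\h(tv)$ along the original net, which forces $\h_{z'_\alpha}$ to converge pointwise there as well.
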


All the results presented above examine $d-$weak convergence of sequences in the 
whole normed linear space. Proper subsets become legit metric spaces when we 
equip them with the subspace topology. With this in mind, we can produce simple 
examples of metric spaces where unbounded sequences converge $d-$weakly, or 
where $d-$weakly convergent sequences have more than one limit. An example where
both situations happen is the following. Equip the set 
$X=\{0,e_1, 2e_2, 3e_3,\dots\}$ with the metric $d$ that is induced by the norm 
of $\ell_1$. We can verify that the unbounded sequence $(ne_n)_{n\geq 1}$ in $X$ 
converges $d-$weakly to every point in $X$. More examples are presented in 
Section~\ref{sec:WC}.

The following result states that the set of all $d-$weak limits of a sequence in
a $W-$convex metric space is also $W-$convex. The concept of $W-$convexity was 
introduced by W.~Takahashi in \cite{Takahashi1970}. We recall this concept in 
Definition~\ref{def:TakahashiConv}. Briefly we mention that every metric space 
that admits a conical geodesic bicombing is $W-$convex.

\begin{thm}\label{thm:dWeakInWconvex}
Let $(X,d)$ be a $W-$convex metric space. Then, the set of all $d-$weak limits of
a sequence in $X$ is $W-$convex and closed.
\end{thm}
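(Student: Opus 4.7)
My plan is to prove the two assertions of the theorem---closedness and $W$-convexity of the set $L$ of $d$-weak limits of $(x_n)_{n\geq 1}$---separately, exploiting two properties of metric functionals: that they are $1$-Lipschitz with respect to $d$ (hence $d$-continuous), and that, on a $W$-convex metric space, every metric functional satisfies a convexity inequality along $W$.

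For closedness I would argue as follows. Suppose $(z_k)_{k\geq 1}\subset L$ converges in $d$ to some $z\in X$. For every metric functional $\h$ on $X$ we then have
\[
	\liminf_{n\to\infty}\h(x_n) \geq \h(z_k) \quad\text{for each } k,
\]
and the $1$-Lipschitz property gives $\h(z_k)\to \h(z)$ as $k\to\infty$. Letting $k\to\infty$ in the displayed inequality yields $\liminf_n \h(x_n) \geq \h(z)$, so $z\in L$ by Definition~\ref{def:WeakConv}.

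For $W$-convexity the central step is the lemma
\[
	\h(W(u,v,t)) \leq (1-t)\h(u) + t\h(v), \qquad u,v\in X,\ t\in[0,1],
\]
valid for every metric functional $\h$ on a $W$-convex metric space. I would prove it by writing $\h$ as a pointwise limit of elementary functionals of the form $h_{y_k}(\cdot) = d(\cdot, y_k) - d(b, y_k)$ (with $b$ a fixed basepoint), applying the defining $W$-convexity inequality at $a = y_k$, namely $d(y_k, W(u,v,t)) \leq (1-t) d(u, y_k) + t d(v, y_k)$, and subtracting $d(b, y_k)$ from both sides to get the inequality for $h_{y_k}$; pointwise limits then preserve it. Granted this lemma, given $u,v\in L$ and $t\in[0,1]$, the two inequalities $\liminf_n \h(x_n) \geq \h(u)$ and $\liminf_n \h(x_n) \geq \h(v)$ combine convexly to give
\[
	\liminf_{n\to\infty}\h(x_n) \geq (1-t)\h(u) + t\h(v) \geq \h(W(u,v,t)),
\]
so $W(u,v,t)\in L$.

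The step I expect to require the most care is the convexity lemma for metric functionals, since it is the only place where the $W$-convex structure of $(X,d)$ actually intervenes. Once the description of metric functionals from Section~\ref{sec:MF} is invoked to express each $\h$ as a pointwise limit of elementary functionals $h_{y_k}$, the lemma reduces to the harmless observation that $\leq$ is stable under pointwise limits. No other genuine difficulties are visible, since closedness is a direct consequence of $1$-Lipschitz continuity and the $W$-convexity of $L$ is a direct consequence of the lemma together with the definition of $d$-weak convergence.
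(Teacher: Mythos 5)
Your proposal is correct and follows essentially the same route as the paper: closedness via the $1$-Lipschitz property of metric functionals, and $W$-convexity via the lemma that every metric functional on a $W$-convex space satisfies $\h(W(u,v,t))\leq(1-t)\h(u)+t\h(v)$ (the paper's Proposition~\ref{prop:ConvexMF}), proved exactly as you indicate by passing the defining inequality for $d$ through pointwise limits of internals. The only cosmetic caveat is that in general the approximating family of internals must be a net rather than a sequence (sequences suffice only when $X$ is separable), but the limit argument is unchanged.
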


\section{Metric functionals}\label{sec:MF}

\subsection{Where do they appear?}

Metric functionals have become valuable tools in investigations of iterative 
processes in metric structures. 

Fixed point problems in metric spaces are studied by means of metric functionals. 
Briefly, instead of looking directly for a fixed point in the space, one rather 
seeks a metric functional that remains (sub)invariant by the iterations of the 
mapping or family of mappings involved in the problem. These ideas have been put 
into action in 
\cite{Gaubert-Vigeral2012}, \cite{Lemmens-Lins-Nussbaum2018}, 
\cite{Gutierrez-Karlsson2021}, \cite{Lins2022}, \cite{Karlsson2023}, 
\cite{Gutierrez-Nevanlinna2024}. 

Metric functionals also have been applied to study noncommuting random products. 
There is always a metric functional that determines the direction along which a 
random product grows \cite{Gouezel-Karlsson2020}. Motivated by this result, 
A.~Avelin and A.~Karlsson \cite{Avelin-Karlsson2022} have used metric 
functionals to study a cut-off phenomenon associated with the dynamics of deep 
neural networks.

Metric functionals have appeared recently in studies on Lipschitz-free spaces 
\cite{Megrelishvili2025} and Hermitian symmetric spaces 
\cite{Chu-Cueto-Lemmens2024}.

\subsection{What are they like?}\label{ssec:Construction}

Let $(X,d)$ be a metric space. We choose a basepoint $o$ in $X$ and for each point 
$w$ in $X$ we consider the funtional 
\begin{equation}\label{eq:int_mf}
	\h_w(x) = d(x,w)-d(o,w),\;\text{for all}\;x\;\text{in}\;X.
\end{equation}
We equipped the product space $\R^X$ with the topology of pointwise convergence 
and denote by $X^{\vee}$ the subset of $\R^X$ containing all the functionals of 
the form (\ref{eq:int_mf}). The closure of $X^{\vee}$ is denoted by 
$X^{\diamondsuit}$ and each element $\h$ in it is called a 
\emph{metric functional}.

The next statement is obtained by applying properties of the metric $d$ to the 
functionals (\ref{eq:int_mf}).
\begin{prop}[{\cite[Chapter~3]{GutierrezThesis}}]
The following properties hold:
\begin{enumerate}
	\item The mapping $w\mapsto\h_w$ from $X$ to $X^\vee$ is injective and 
	continuous.
	\item The space $X^{\diamondsuit}$ is compact and Hausdorff. In particular, 
	for every metric functional $\,\h \in X^\diamondsuit$ there exists a net of 
	points $(w_\alpha)$ in $X$ such that 
	$$
		\h(x) = \lim_{\alpha}\,\h_{w_\alpha}(x),\;\text{for all}\;x\;\text{in}\;X. 
	$$
	\item Every metric functional $\,\h \in X^\diamondsuit$ is $1-$Lipschitz; 
	that is, we have
	$$
		\abs{\h(x)-\h(y)} \leq d(x,y),\;\text{for all}\;x\;
		\text{and}\;y\;\text{in}\;X. 
	$$
\end{enumerate}
\end{prop}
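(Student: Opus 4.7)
My plan is to dispatch the three items in order, using the reverse triangle inequality for (1) and (3), and Tychonoff's theorem for (2).

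For (1), injectivity will come from evaluating the identity $\h_w = \h_{w'}$ at the two test points $x = w$ and $x = w'$; the first substitution gives $d(w,w') = d(o,w') - d(o,w)$ and the second gives $d(w',w) = d(o,w) - d(o,w')$, and adding these forces $d(w,w') = 0$. Continuity of the map $w \mapsto \h_w$ from $X$ into $\R^X$ is just pointwise continuity, and for each fixed $x$ the bound $|\h_{w_n}(x) - \h_w(x)| \leq |d(x,w_n) - d(x,w)| + |d(o,w_n) - d(o,w)| \leq 2\, d(w_n, w)$ follows from two applications of the reverse triangle inequality.

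For (2), the key observation is the uniform bound $|\h_w(x)| \leq d(o,x)$ for every $w,x \in X$, again by the reverse triangle inequality. This places $X^\vee$ inside the product $P = \prod_{x \in X}[-d(o,x),\,d(o,x)]$, which is compact in the topology of pointwise convergence by Tychonoff's theorem and Hausdorff as a subspace of $\R^X$ with that topology. The closure $X^\diamondsuit$ is then a closed subset of the compact Hausdorff space $P$, hence compact and Hausdorff. The ``in particular'' clause is the very definition of closure in the product topology: every $\h \in X^\diamondsuit$ arises as a pointwise net-limit of elements $\h_{w_\alpha}$ with $w_\alpha \in X$.

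For (3), I would first note that each representing functional $\h_w$ is $1$-Lipschitz, since $|\h_w(x) - \h_w(y)| = |d(x,w) - d(y,w)| \leq d(x,y)$. Given an arbitrary $\h \in X^\diamondsuit$ and a net $(w_\alpha)$ supplied by (2) with $\h_{w_\alpha}(x) \to \h(x)$ for every $x$, the estimate $|\h_{w_\alpha}(x) - \h_{w_\alpha}(y)| \leq d(x,y)$ survives the pointwise limit and yields the desired conclusion. I do not anticipate a genuine obstacle anywhere: apart from the short injectivity computation, every step is either a direct application of the reverse triangle inequality or a standard invocation of Tychonoff's theorem together with the fact that closed subsets of compact Hausdorff spaces are compact Hausdorff.
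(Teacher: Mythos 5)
Your proof is correct, and it is exactly the standard argument the paper alludes to when it says the proposition ``is obtained by applying properties of the metric $d$ to the functionals'' $\h_w$ (the paper itself only cites the author's thesis rather than giving details). The injectivity computation, the $2\,d(w_n,w)$ continuity bound, the embedding of $X^\vee$ into $\prod_{x\in X}[-d(o,x),d(o,x)]$ via Tychonoff, and the passage of the $1$-Lipschitz estimate to pointwise net limits are all sound and complete.
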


\begin{rem}
The set $X^\diamondsuit$ is sometimes called the \emph{metric compactification} 
of $X$. The spaces $X$ and $X^\vee$, however, need not be homeomorphic. In fact, 
for the unbounded sequence $(ne_n)_{n\geq 1}$ in $X=\ell_1$ we have
$\h_{ne_n} \rightarrow \h_0$. Another example is the CAT(0) space given in 
\cite[Example~3.2]{Karlsson2022}. If one insists on having $X^\diamondsuit$ as a 
standard topological compactification of $X$, some conditions on $X$ must hold. 
Such conditions are discussed in \cite[Theorem~2.1]{Daniilidis-et-al2024}. 
\end{rem}

The existence of the continuous injection 
$ 
X\xhookrightarrow{} X^\vee \subset X^\diamondsuit
$
motivates us to call all the elements in $X^\vee$ \emph{internals}. In this way, 
an analog of the Banach-Alaoglu theorem reads: \emph{Given a net of internals
$(\h_{w_\alpha})$, there exists a subnet $(\h_{w_\beta})$ and a metric functional
$\h \in X^\diamondsuit$ such that $\h_{w_\beta}(x) \rightarrow \h(x)$ for all 
$x$ in $X$}. This type of convergence may be seen as an analog of the weak-star 
convergence of continuous linear functionals. Notice that all metric functionals 
are continuous. In the linear theory, however, continuity of 
linear functionals must be indicated explicitly. 

There exist explicit formulas for all metric functionals on infinite dimensional
$\ell_p$ spaces with $1\leq p<\infty$, see \cite{Gutierrez2019-2}. Explicit 
formulas for an important class of metric functionals on $C(K)$ are shown in 
\cite{Walsh2018}.

\begin{rem}
Limits of the type $\h_{w_n} \rightarrow \h$ behave differently in $\ell_1$ and 
$\ell_p$ with $1<p<\infty$. The functional identically zero, denoted by $\mathbf{0}$, is a metric 
functional on each $\ell_p$ with $1<p<\infty$, but is not on $\ell_1$. Define $w_n:=ne_n$ for all 
$n\geq 1$. In $\ell_p$ we have $\h_{w_n}\rightarrow \mathbf{0}$, whereas in 
$\ell_1$ we have $\h_{w_n}\rightarrow \h_0$.
\end{rem}

\subsection{What properties do they have?}

The space $X^\diamondsuit$ is not always metrizable. It becomes metrizable when
$X$ is separable. To be precise, we have the following.
 
\begin{prop}
Let $(X,d)$ be a separable metric space. Then, every sequence of points 
$(x_n)_{n\geq 1}$ in $X$ has a subsequence $(w_{i})_{i\geq 1}$ such that the 
sequence of internals $(\h_{w_i})_{i\geq 1}$ converges in $X^\diamondsuit$.   
\end{prop}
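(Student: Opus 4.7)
The plan is a standard Arzelà–Ascoli/diagonal-extraction argument, using the fact that internals are $1$-Lipschitz and uniformly pointwise bounded.

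First, I would fix a countable dense subset $D = \{y_k : k \geq 1\}$ of $X$, which exists by separability, and fix the basepoint $o \in X$ used to define the internals. For any $w \in X$, the reverse triangle inequality gives $\abs{\h_w(x)} = \abs{d(x,w) - d(o,w)} \leq d(x,o)$, so for each fixed $k$ the real sequence $(\h_{x_n}(y_k))_{n\geq 1}$ lies in the bounded interval $[-d(y_k,o), d(y_k,o)]$.

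Next, I would apply a standard Cantor diagonal argument: extract nested subsequences $(x_n^{(k)})_{n\geq 1}$ of $(x_n)_{n\geq 1}$ such that $\h_{x_n^{(k)}}(y_j)$ converges in $\R$ as $n \to \infty$ for every $j = 1, \dots, k$; the diagonal subsequence $w_i := x_i^{(i)}$ then satisfies that $\h_{w_i}(y_k)$ converges in $\R$ for every $k \geq 1$. Call the limit $\h(y_k)$.

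The step that requires a little care is promoting convergence from the dense set $D$ to all of $X$. Here I would invoke the preceding proposition: every $\h_{w_i}$ is $1$-Lipschitz, so the family $\{\h_{w_i}\}$ is equicontinuous on $X$. Given $x \in X$ and $\varepsilon > 0$, pick $y_k \in D$ with $d(x, y_k) < \varepsilon/3$; then
\[
    \abs{\h_{w_i}(x) - \h_{w_j}(x)}
    \leq \abs{\h_{w_i}(x) - \h_{w_i}(y_k)} + \abs{\h_{w_i}(y_k) - \h_{w_j}(y_k)} + \abs{\h_{w_j}(y_k) - \h_{w_j}(x)},
\]
which is bounded by $2d(x,y_k) + \abs{\h_{w_i}(y_k) - \h_{w_j}(y_k)} < \varepsilon$ for $i, j$ large enough. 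Hence $(\h_{w_i}(x))_{i\geq 1}$ is Cauchy in $\R$ for every $x \in X$, so it converges to some value, call it $\h(x)$.

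Finally, the pointwise limit $\h \in \R^X$ lies in $X^\diamondsuit$ by the very definition of $X^\diamondsuit$ as the closure of $X^\vee$ in the product topology of $\R^X$ (pointwise convergence). Therefore $\h_{w_i} \to \h$ in $X^\diamondsuit$, which is the required conclusion. The only mildly subtle point is the equicontinuity-plus-density step, but this is routine once the $1$-Lipschitz property of internals is in hand.
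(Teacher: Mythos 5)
Your proof is correct and follows essentially the same route as the paper's: a Cantor diagonal extraction over a countable dense subset, followed by the $1$-Lipschitz estimate $\abs{\h_{w_i}(x)-\h_{w_i}(y_k)}\leq d(x,y_k)$ to upgrade convergence from the dense set to all of $X$. The only (harmless) difference is that you identify the limit functional via a pointwise Cauchy argument together with the definition of $X^\diamondsuit$ as a closure, whereas the paper first invokes compactness of $X^\diamondsuit$ to produce a limit point and then verifies convergence to it.
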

	
\begin{proof}
We fix a point $o$ in $X$ and consider a countable dense subset $\{y_1, y_2, ...\}$ 
of $X$. Let $(x_n)_{n\geq 1}$ be a sequence in $X$. Each internal $\h_{x_n}$ 
satisfies the property $\abs{\h_{x_n}(y_k)} \leq d(o, y_k)$ for all positive 
integers $k$. By using Cantor's diagonal argument, we obtain a subsequence 
$(w_i)_{i\geq 1}$ of $(x_n)_{n\geq 1}$ such that for every positive integer $k$ 
the sequence $(\h_{w_i}(y_k))_{i\geq 1}$ converges to a real number, say $r_k$, 
in the closed interval $[-d(o,y_k),\,d(o,y_k)]$. Now, since $X^\diamondsuit$ is a 
compact Hausdorff space, the sequence of internals $(\h_{w_i})_{i\geq 1}$ has a 
limit point $\h\in X^\diamondsuit$ so that $\h(y_k)=r_k$ for all $k$.

Let $x$ be a point in $X$ and $\epsilon$ be a positive real number. Then, there 
is an integer $m\geq 1$ such that $d(x,y_m) < \epsilon /4$. The identity 
$$
	\lim_{i\to\infty}\h_{w_i}(y_m)=\h(y_m)
$$ 
implies that there is an integer $i_0 \geq 1$ such that 
$\abs{\h_{w_i}(y_m) - \h(y_m)} < \epsilon / 2$ for all $i\geq i_0$. Thus, for 
every $i\geq i_0$ we have
\begin{equation*}
	\abs{\h_{w_i}(x) - \h(x)}
	\leq 2\,d(x,y_m) + 
	\abs{\h_{w_i}(y_m) - \h(y_m)} < \epsilon\,. 
\end{equation*}
\end{proof}	

Sometimes, the space $X^\diamondsuit$ consists of only internals. For example, 
if $X$ is a compact metric space, then we have $X^\diamondsuit = X^\vee$. The 
same phenomenon can occur for noncompact sets; for example when $X$ is the 
closed unit ball of $\ell_1$. 

There are metric spaces $X$ for which we have 
$X^\diamondsuit = X^\vee\cup\{\mathbf{0}\}$, where $\mathbf{0}$ is the functional
identically zero. For example, let $X$ be the set of real numbers and equip it 
with the metric $d=\abs{\cdot-\cdot}^p$ with $0<p<1$.

The following property holds in all metric spaces, and is a straightforward 
consequence of the definition of metric functionals.

\begin{prop}
Let $(X,d)$ be a metric space. Then, for every point $w$ in $X$ we have 
$$
	d(o,w)=\max_{\h\in X^\diamondsuit}\h(w).
$$
\end{prop}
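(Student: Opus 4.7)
The plan is to find one internal that attains the value $d(o,w)$ and then show that no metric functional can exceed it. The proof should be very short and rely on two previously stated facts: that every metric functional is $1$-Lipschitz, and that every metric functional arises as a pointwise limit of internals.

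First I would establish the upper bound. Every internal $\h_v$ vanishes at the basepoint: $\h_v(o) = d(o,v) - d(o,v) = 0$. Since any $\h \in X^\diamondsuit$ is a pointwise limit of a net of internals $(\h_{w_\alpha})$, evaluating at $o$ gives $\h(o) = \lim_\alpha \h_{w_\alpha}(o) = 0$. Combining this with the $1$-Lipschitz property yields
\[
	\h(w) = \h(w) - \h(o) \leq \abs{\h(w) - \h(o)} \leq d(w,o),
\]
for every $\h \in X^\diamondsuit$ and every $w$ in $X$.

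Next I would show the bound is tight by exhibiting the maximizer. The internal $\h_o \in X^\vee \subset X^\diamondsuit$ associated with the basepoint itself satisfies
\[
	\h_o(w) = d(w,o) - d(o,o) = d(o,w),
\]
so the supremum is attained and is in fact a maximum. Putting the two steps together gives the stated identity.

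There is no real obstacle here; the only subtle point is making sure that the identity $\h(o)=0$ is inherited by arbitrary elements of $X^\diamondsuit$ (and not just by internals), but this is immediate from pointwise convergence. The whole argument is therefore a one-line observation once the vanishing at the basepoint and the $1$-Lipschitz property are in hand.
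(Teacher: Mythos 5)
Your argument is correct and is essentially the same as the paper's: the upper bound comes from the $1$-Lipschitz property together with the vanishing of every metric functional at the basepoint, and the maximum is attained by the internal $\h_o$. The paper merely leaves the verification that $\h(o)=0$ implicit, which you spell out.
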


\begin{proof}
The claim is trivial when $w$ is the basepoint $o$ because at this point every 
metric functional vanishes. So, let $w$ be a point in $X$ with $w\neq o$. Since 
every metric functional is $1-$Lipschitz, we have $\h(w) \leq d(o,w)$ for all 
$\h$ in $X^\diamondsuit$, and the equality holds for the internal $\h_o$.
\end{proof}

\begin{rem}
The Hahn-Banach theorem in normed linear spaces implies that for every vector $v$ 
one has $\norm{v}=\max_{f}\abs{f(v)}$, where the maximum is taken over all 
continuous linear functionals of norm at most $1$. A weaker version of this will 
be shown in Proposition~\ref{prop:NormAsSupOfBusm}, where the Hahn-Banach theorem 
is not used.
\end{rem}

Next we discuss some properties that metric functionals possess when the given 
metric space has a certain convex structure.

\begin{defn}\label{def:TakahashiConv}
A metric space $(X,d)$ is said to be $W-$convex if there exists a mapping $W$ 
from $X\times X\times[0,1]$ to $X$ such that 
$$
	d(z,W(x,y,t)) \leq (1-t)d(z,x) + td(z,y),
$$ 
for all points $x$, $y$, and $z$ in $X$ and all $t$ in $[0,1]$. This concept was 
introduced by W.~Takahashi in \cite{Takahashi1970} to study some fixed point 
theorems.
\end{defn}

Every convex subset of a normed linear space is $W-$convex as a 
metric space. To see this, one considers the mapping $W(x,y,t)=(1-t)x+ty$. 

Nontrivial examples of $W-$convex metric spaces appear naturally in
geometric group theory. Metric spaces that admit a \emph{conical bicombing} are 
such examples. A standard reference on conical bicombings is the paper
\cite{Descombes-Lang2015}. It seems that the authors of \cite{Descombes-Lang2015} 
were unaware of Takahashi's notion of convexity.

A real-valued functional $f$ defined on a $W-$convex metric space $(X,d)$ is said 
to be $W-$convex if for every $x$ and $y$ in $X$ and $t$ in $[0,1]$ we have
$$
	f(W(x,y,t)) \leq (1-t)f(x)+tf(y).
$$

\begin{prop}\label{prop:ConvexMF}
Let $(X,d)$ be a $W-$convex metric space. Then, every metric functional on $X$ is 
$W-$convex.
\end{prop}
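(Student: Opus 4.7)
The plan is to verify the $W$-convexity inequality first for internals and then to pass to the limit to obtain it for arbitrary metric functionals. This is essentially a two-line argument, so the challenge is more bookkeeping than substance.

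First I would fix $x, y \in X$ and $t \in [0,1]$, set $z = W(x,y,t)$, and for any $w \in X$ compute directly from the definition of an internal,
\begin{equation*}
    \h_w(W(x,y,t)) = d(W(x,y,t), w) - d(o,w) \leq (1-t)d(x,w) + t\,d(y,w) - d(o,w),
\end{equation*}
using the defining inequality of the map $W$ with $z = w$. The right-hand side can be rewritten as $(1-t)(d(x,w)-d(o,w)) + t(d(y,w)-d(o,w)) = (1-t)\h_w(x) + t\,\h_w(y)$. So every internal is $W$-convex.

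Next, by the second item of the proposition characterizing $X^\diamondsuit$, every metric functional $\h$ arises as a pointwise limit $\h(x) = \lim_\alpha \h_{w_\alpha}(x)$ for some net $(w_\alpha)$ in $X$. Applying the inequality just established to each internal $\h_{w_\alpha}$ at the three points $W(x,y,t), x, y$ and then taking the limit along $\alpha$ preserves the inequality, yielding $\h(W(x,y,t)) \leq (1-t)\h(x) + t\,\h(y)$. Since $x, y, t$ were arbitrary, $\h$ is $W$-convex.

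I do not anticipate any obstacle: the argument only uses the defining inequality of $W$-convexity, linearity of the expression $d(\cdot,w) - d(o,w)$ in the convex combination, and that pointwise limits respect $\leq$. No completeness, separability, or topological hypothesis beyond what is already built into $X^\diamondsuit$ is needed.
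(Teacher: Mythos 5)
Your argument is correct and is essentially the paper's own proof: the paper likewise applies the defining inequality of $W$ with $z=w_\alpha$ to get $d(W(x,y,t),w_\alpha)-d(o,w_\alpha)\leq(1-t)\h_{w_\alpha}(x)+t\h_{w_\alpha}(y)$ and then passes to the limit along the net. Your version merely makes the intermediate step (internals are $W$-convex) explicit.
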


\begin{proof}
Let $\h$ be a metric functional on $X$. Suppose that $(w_\alpha)$ is a net of 
points in $X$ so that $\h_{w_\alpha} \rightarrow \h$. Let $x$ and $y$ be two 
points in $X$ and $t$ in [0,1]. Since $(X,d)$ is $W-$convex, for all points 
$w_\alpha$ we have
$$
	d(W(x,y,t),w_\alpha) - d(o,w_\alpha) \leq (1-t)\h_{w_\alpha}(x) 
	+ t\h_{w_\alpha}(y).
$$
Thus, we get $\h(W(x,y,t)) \leq (1-t)\h(x) + t\h(y)$.
\end{proof}

Now we discuss properties of an important class of metric functionals in normed
linear spaces. The obvious basepoint that we choose is the origin.
 
\begin{defn}\label{def:Busm}
Let $(X,\norm{\cdot})$ be a normed linear space.
A Busemann functional associated with a unit vector $u$ in $X$ is denoted by 
$\h^u$ and defined as
$$
\h^u (x) := 
\lim_{t\to+\infty}(\norm{x-tu}-t),
$$
for all $x$ in $X$. This limit exists for each $x$ in $X$ because the function 
$t\mapsto\norm{x-tu} - t\,$ defined on the interval $[0,+\infty)$ is monotone 
non-increasing and bounded below by $-\norm{x}$.  
\end{defn}

\begin{rem}
Every Busemann functional is a metric functional.
\end{rem}

Some of the following statements are probably well-known. We recall them here for 
the reader's convenience.

\begin{prop}\label{prop: Bf1}
Let $(X,\norm{\cdot})$ be a normed linear space.
For every $v$ in $X$ there are two Busemann functionals $\h^{u_1}$ and $\h^{u_2}$ 
such that 
$$
	\h^{u_1}(v) = \norm{v} = -\h^{u_2}(v).
$$
\end{prop}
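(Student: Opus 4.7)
The plan is to give explicit choices of the unit vectors $u_1$ and $u_2$ in terms of $v$ itself, and then verify the claim by a direct computation from the definition of the Busemann functional. There is essentially no obstacle here; the difficulty is only in spotting that the natural candidates work.

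First I would dispose of the trivial case $v=0$. Here $\norm{v}=0$, and for any unit vector $u$ one has $\h^u(0) = \lim_{t\to+\infty}(\norm{tu}-t) = \lim_{t\to+\infty}(t-t)=0$, so any choice of $u_1$ and $u_2$ satisfies the required equalities.

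For $v\neq 0$, set $u_1 := -v/\norm{v}$ and $u_2 := v/\norm{v}$, both unit vectors. For $u_2$ and any $t\geq 0$ we have
$$
\norm{v - tu_2} \;=\; \Bigl\lVert\bigl(1-t/\norm{v}\bigr)v\Bigr\rVert \;=\; \bigl\lvert 1-t/\norm{v}\bigr\rvert\cdot\norm{v},
$$
which for $t>\norm{v}$ equals $t-\norm{v}$. Taking $t\to+\infty$ yields $\h^{u_2}(v) = -\norm{v}$, i.e.\ $-\h^{u_2}(v) = \norm{v}$. An analogous computation for $u_1$, using $\norm{v-tu_1} = (1+t/\norm{v})\norm{v} = \norm{v}+t$ for all $t\geq 0$, gives $\h^{u_1}(v) = \norm{v}$.

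The main (and only) point to watch is that the limits defining $\h^{u_1}$ and $\h^{u_2}$ actually reduce to elementary one-dimensional computations along the line $\R v$, because the relevant arguments $v-tu_i$ are all scalar multiples of $v$. Once this is noticed, the argument is a few lines long.
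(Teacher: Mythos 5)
Your proposal is correct and follows essentially the same route as the paper: the same choice $u_1=-v/\norm{v}$, $u_2=v/\norm{v}$, and the same direct one-dimensional computation along the line $\R v$. The only cosmetic difference is that you verify the case $v=0$ by an explicit limit, whereas the paper invokes the general fact that all metric functionals vanish at the basepoint.
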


\begin{proof}
The case $v=0$ is trivial because all metric functionals vanish at $0$. We assume 
now that $v \neq 0$. Consider the unit vector $u_1=(-1/\norm{v})v$. Then, we have
$$
   \h^{u_1}(v) =\lim_{t\to+\infty}(\norm{v-tu_1}-t) 
        =\lim_{t\to+\infty}(\abs{\norm{v}+t} - t) 
        =\norm{v}.
$$
The unit vector $u_2 = -u_1$ gives $\h^{u_2}(v)=-\norm{v}$.
\end{proof}

\begin{prop}\label{prop:NormAsSupOfBusm}
Let $(X,\norm{\cdot})$ be a normed linear space.
For every vector $v$ in $X$ we have
$$
	\norm{v} = \sup\{\,\h(v)\, \mid \h\;\text{is a Busemann functional on}\; X\}
$$
\end{prop}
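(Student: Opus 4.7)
The plan is to show both inequalities by invoking results already established in the excerpt. The supremum will in fact be attained, so this is really an assertion that the maximum equals $\norm{v}$.

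First I would establish the upper bound: every Busemann functional $\h^u$ satisfies $\h^u(v) \leq \norm{v}$. The key observation is that $\h^u(0) = 0$, which is immediate from the definition because $\lim_{t\to+\infty}(\norm{-tu} - t) = \lim_{t\to+\infty}(t-t) = 0$. Since Busemann functionals are metric functionals (by the remark following Definition~\ref{def:Busm}), the third item of the earlier proposition tells us they are $1$-Lipschitz. Therefore
\[
   \h^u(v) \leq \abs{\h^u(v) - \h^u(0)} \leq d(v,0) = \norm{v},
\]
for every unit vector $u$ in $X$. Taking the supremum over all Busemann functionals yields $\sup_u \h^u(v) \leq \norm{v}$.

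Next I would produce the matching lower bound, which is immediate from Proposition~\ref{prop: Bf1}: there exists a unit vector $u_1$ (namely $u_1 = -v/\norm{v}$ when $v \neq 0$) such that $\h^{u_1}(v) = \norm{v}$. When $v = 0$, the equality $\norm{v} = 0 = \h^u(0)$ holds trivially for every Busemann functional. Combining the two inequalities gives the required identity, and in fact the supremum is attained.

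There is no real obstacle here since both directions are consequences of results already proved; the content of the proposition is essentially that the norm of every vector is realized by a Busemann functional, a fact that \textit{Proposition}~\ref{prop: Bf1} already delivers explicitly. The point worth emphasizing in the write-up is that, as noted in the earlier remark, this recovers a weak version of the Hahn–Banach identity $\norm{v} = \max_f \abs{f(v)}$ without invoking the axiom of choice or the existence of nontrivial continuous linear functionals.
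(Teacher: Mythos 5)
Your proof is correct and follows essentially the same route as the paper: the lower bound comes from Proposition~\ref{prop: Bf1}, and the upper bound from the fact that Busemann functionals are metric functionals, hence $1$-Lipschitz and vanishing at the origin. Your write-up merely makes the upper bound more explicit than the paper's one-line justification.
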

\begin{proof}
By Proposition~\ref{prop: Bf1}, for every $v$ there is a Busemann functional $\h^u$
on $X$ such that $\h^u(v)=\norm{v}$. Thus, we have
$$
	\sup\{\,\h(v)\, \mid \h\;\text{is a Busemann functional on}\; X\}
	\geq \h^u(v) = \norm{v}.
$$
The other inequality follows from the fact that every Busemann functional is a
metric functional.
\end{proof}

\begin{rem}
The previous statement is valid in geodesic metric spaces which have the property
that every geodesic segment can be extended to a ray. This was observed in
\cite[Section~6]{Karlsson2021}.
\end{rem}

\begin{prop}
Let $(X,\norm{\cdot})$ be a normed linear space.
Every Busemann functional is subadditive and positively homogeneous. In other 
words, if $\h^u$ is a Busemann functional on $X$, then we have 
\begin{enumerate}
\item $\,\h^u(x+y) \leq \h^u(x) + \h^u(y),\;$ for all $x$ and $y$ in $X$, and
     
\item $\,\h^u(sx)=s\h^u(x),\;$ for all $x$ in $X$ and for all $s\geq 0$.
\end{enumerate}
\end{prop}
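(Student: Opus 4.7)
The plan is to work directly from the defining limit $\h^u(x)=\lim_{t\to+\infty}(\norm{x-tu}-t)$, using only the triangle inequality and absolute homogeneity of the norm. Both statements should follow from elementary manipulations inside the limit; there is no real obstacle, but the reparameterization for homogeneity and the ``split the ray in half'' trick for subadditivity are the two observations that make everything go through.

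For subadditivity, the key step is to split the parameter: write $tu=\tfrac{t}{2}u+\tfrac{t}{2}u$ so that
\[
	\norm{(x+y)-tu}=\bigl\lVert (x-\tfrac{t}{2}u)+(y-\tfrac{t}{2}u)\bigr\rVert
	\leq \norm{x-\tfrac{t}{2}u}+\norm{y-\tfrac{t}{2}u}.
\]
Subtracting $t=\tfrac{t}{2}+\tfrac{t}{2}$ from both sides gives
\[
	\norm{(x+y)-tu}-t \leq \bigl(\norm{x-\tfrac{t}{2}u}-\tfrac{t}{2}\bigr)+\bigl(\norm{y-\tfrac{t}{2}u}-\tfrac{t}{2}\bigr).
\]
Since $t\to+\infty$ iff $\tfrac{t}{2}\to+\infty$, and each of the two parenthesized terms on the right is known to converge (by the existence statement in Definition~\ref{def:Busm}), passing to the limit yields $\h^u(x+y)\leq \h^u(x)+\h^u(y)$.

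For positive homogeneity, the case $s=0$ is immediate because $\h^u(0)=\lim_{t\to+\infty}(\norm{-tu}-t)=0$. For $s>0$, the plan is to factor $s$ out of the norm and change variables. Using $\norm{sx-tu}=s\bigl\lVert x-\tfrac{t}{s}u\bigr\rVert$, I would substitute $r=t/s$, so that $t=sr$ and $r\to+\infty$ as $t\to+\infty$, producing
\[
	\norm{sx-tu}-t = s\bigl(\norm{x-ru}-r\bigr).
\]
Taking $r\to+\infty$ gives $\h^u(sx)=s\h^u(x)$, completing the proof. The only point that one might want to double-check is the legitimacy of interchanging the scalar factor with the limit, but this is immediate since $s$ is a fixed nonnegative constant.
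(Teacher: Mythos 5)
Your proof is correct and follows essentially the same route as the paper: the paper also splits the ray parameter (writing the bound with $2t$ rather than substituting $t/2$, which is the same manipulation) and uses the identity $\norm{sx-tu}-t=s(\norm{x-(t/s)u}-(t/s))$ for homogeneity. The only cosmetic difference is that the paper phrases the passage to the limit via the infimum representation $\h^u(x)=\inf_{t\geq 0}(\norm{x-tu}-t)$, while you argue directly with limits; both are valid.
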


\begin{proof}
By the definition of a Busemann functional $\h^u$, we have 
$$
	\h^u(x)=\inf_{t\geq 0}(\norm{x-tu}-t),
$$ 
for all $x\in X$. To complete the proof, it suffices to notice that if $x$ and 
$y$ are vectors in $X$ and $s > 0$, then for every $t\geq 0$ we have
$$ \h^u(x+y) \leq \norm{x+y - 2t} - 2t
\leq (\norm{x-tu}-t) + (\norm{y-tu}-t)$$
and
$$ \norm{sx - tu} - t = s (\norm{x-(t/s)u}-(t/s)).$$
\end{proof}

It is well-known that the Hahn-Banach theorem implies that continuous linear 
functionals separate points in normed linear spaces. A natural question is 
whether metric functionals separate points. Internals trivially do this. So, we
are really looking for non-internal metric functionals that separate points.

Karlsson presents a metric Hahn-Banach statement in 
\cite[Proposition~1]{Karlsson2021}. But this result does not immediately give
a separation property. In normed linear spaces, one only needs the definition of 
a Busemann functional (Definition~\ref{def:Busm}) to obtain the following 
separation statement.

\begin{thm}
Let $(X,\norm{\cdot})$ be a normed linear space.
If $x$ and $y$ are two distinct vectors in $X$, then there exists a Busemann 
functional $\,\h^u$ such that $\,\h^u(x)\neq \h^u(y)$.
\end{thm}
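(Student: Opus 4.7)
The plan is to use the subadditivity and positive homogeneity of Busemann functionals established in the previous proposition, combined with a clever choice of the direction~$u$. Given distinct $x$ and $y$ in $X$, I would set $r := \norm{y-x} > 0$ and take the unit vector $u := (y-x)/r$. This is the decisive choice, since it makes $y-x$ a positive multiple of the defining direction of $\h^u$: concretely, $y = x + ru$.

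Next I would apply subadditivity and then positive homogeneity to obtain
\begin{equation*}
	\h^u(y) = \h^u(x + ru) \leq \h^u(x) + \h^u(ru) = \h^u(x) + r\,\h^u(u).
\end{equation*}
The only remaining computation is $\h^u(u)$. Directly from the definition, for $t \geq 1$,
\begin{equation*}
	\norm{u - tu} - t = \abs{1-t} - t = -1,
\end{equation*}
so $\h^u(u) = -1$. Plugging this back gives $\h^u(y) \leq \h^u(x) - r < \h^u(x)$, which in particular yields $\h^u(x) \neq \h^u(y)$.

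There is no real obstacle in this argument; the whole point is the choice of~$u$. Aligning $u$ with $y-x$ converts the separation question into evaluating $\h^u$ on a positive scalar multiple of its own direction, where the value is pinned down exactly by the definition. Approaches that instead try to appeal directly to Proposition~\ref{prop: Bf1} or Proposition~\ref{prop:NormAsSupOfBusm} on the difference $x-y$ stall, because Busemann functionals are only subadditive, not linear, so one cannot pass from $\h^u(x-y) = \norm{x-y} > 0$ to $\h^u(x) \neq \h^u(y)$ directly; the subadditivity argument above is what bridges this gap.
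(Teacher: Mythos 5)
Your proof is correct and is essentially the paper's own argument: both choose $u$ aligned with the difference vector and use subadditivity of $\h^u$ to force a strict gap between $\h^u(x)$ and $\h^u(y)$. The only cosmetic differences are the sign of $u$ and that you factor $\h^u(ru)=r\h^u(u)$ via positive homogeneity where the paper computes $\h^u(x-y)=-\norm{x-y}$ directly (and additionally upgrades the inequality to an equality using the $1$-Lipschitz property, which is not needed for the conclusion).
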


\begin{proof}
Let $u$ be the unit vector given by $\norm{x-y}u = x-y$. We note that 
$\norm{x-y-tu} = \abs{\norm{x-y}-t}$ for all $t$. Then, the Busemann functional 
$\h^u$ associated with $u$ takes the value $-\norm{x-y}$ at the vector $x-y$. 
We are going to show that
$$
	\h^u(x) = -\norm{x-y} + \h^u(y).
$$
Indeed, subadditivity of $\h^u$ implies $\h^u(x) \leq -\norm{x-y} + \h^u(y)$. 
The other inequality always holds because $\h^u$ is a metric functional.
\end{proof}

If we use the Hahn-Banach theorem, we can obtain explicit formulas for Busemann 
functionals. For a given unit vector $u$, the set of subdifferentials 
\begin{displaymath}
	\partial \norm{u} = \{ f \in X^* 
	\mid \dpair{u}{f} 
	= 1,\ \norm{f} = 1 \}
\end{displaymath}
is a nonempty convex and closed subset of the dual space $X^*$. With this 
information available we can show the following.

\begin{thm}[{\cite[Proposition~4.12]{Gutierrez-Nevanlinna2024}}]
Let $(X,\norm{\cdot})$ be a normed linear space.
Let $u$ be a unit vector in $X$. Then, the Busemann functional $\h^u$ associated 
with $u$ has the form
\begin{displaymath}
	\h^u(x) = \max_{f \in \partial \norm{u}} \dpair{-x}{f},\quad
	\text{for all}\;x\;\text{in}\;X.   
\end{displaymath}
\end{thm}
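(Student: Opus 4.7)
The plan is to prove the identity by establishing two matching inequalities and verifying that the supremum is attained. For the easy direction, take any $f \in \partial \norm{u}$ and any $t \geq 0$. Using $\norm{f}=1$ and $\dpair{u}{f}=1$, I would write
$$
\norm{x-tu} \geq -\dpair{x-tu}{f} = -\dpair{x}{f} + t,
$$
so that $\norm{x-tu} - t \geq \dpair{-x}{f}$. Since $\h^u(x) = \inf_{t\geq 0}(\norm{x-tu}-t)$ by the observation following Definition~\ref{def:Busm}, this yields $\h^u(x) \geq \dpair{-x}{f}$ for every $f \in \partial\norm{u}$.

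For the reverse inequality and to exhibit a maximizer, for each $t > 0$ I would apply the Hahn-Banach theorem to pick $f_t \in X^*$ with $\norm{f_t}=1$ and $\dpair{tu-x}{f_t} = \norm{tu-x} = \norm{x-tu}$. Rearranging gives the key identity
$$
\norm{x-tu} - t = -t\bigl(1 - \dpair{u}{f_t}\bigr) - \dpair{x}{f_t}.
$$
Since the left-hand side decreases monotonically to $\h^u(x)$ as $t\to +\infty$ (hence is bounded) and $\abs{\dpair{x}{f_t}} \leq \norm{x}$, the nonnegative quantity $t(1-\dpair{u}{f_t})$ stays bounded; as $t\to+\infty$ this forces $\dpair{u}{f_t}\to 1$. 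The same identity, together with $1-\dpair{u}{f_t}\geq 0$, also gives $\norm{x-tu} - t \leq -\dpair{x}{f_t}$ for every $t>0$.

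By the Banach-Alaoglu theorem, the closed unit ball of $X^*$ is weak-$\ast$ compact, so the net $(f_t)$ admits a weak-$\ast$ cluster point $f$ along some subnet $(f_{t_\alpha})$ with $t_\alpha\to+\infty$. Evaluating at $u$ yields $\dpair{u}{f}=1$, which together with $\norm{f}\leq 1$ forces $\norm{f}=1$, so $f \in \partial\norm{u}$. Passing to the limit along the subnet in the inequality $\norm{x-tu}-t \leq -\dpair{x}{f_t}$, the left-hand side tends to $\h^u(x)$ and the right-hand side tends to $\dpair{-x}{f}$, giving $\h^u(x) \leq \dpair{-x}{f}$. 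Combined with the first step, this matches the lower bound and shows that the supremum is attained at $f$.

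The main obstacle I expect is the bookkeeping in the compactness argument: one must ensure the weak-$\ast$ subnet selected to produce $f$ is indexed by $t_\alpha\to +\infty$, so that $\dpair{u}{f_{t_\alpha}}\to 1$ (placing $f$ inside $\partial\norm{u}$) and $\norm{x-t_\alpha u}-t_\alpha \to \h^u(x)$ hold simultaneously. The monotonicity of $t\mapsto \norm{x-tu}-t$ recorded in Definition~\ref{def:Busm} makes such a choice routine to arrange.
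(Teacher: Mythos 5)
Your proof is correct. The paper does not actually supply a proof of this theorem---it is imported verbatim from the cited reference---but your argument follows exactly the route the paper signals in the preceding remark (``If we use the Hahn-Banach theorem, we can obtain explicit formulas\dots''): the lower bound $\h^u(x)\geq\dpair{-x}{f}$ straight from the definition of $\partial\norm{u}$ and $\h^u(x)=\inf_{t\geq 0}(\norm{x-tu}-t)$, norming functionals $f_t$ for $tu-x$ giving the identity that forces $t(1-\dpair{u}{f_t})$ to stay bounded and hence $\dpair{u}{f_t}\to 1$, and a weak-$\ast$ cluster point in $B_{X^*}$ furnishing the maximizer. The one point you flag as an obstacle is indeed harmless: a subnet of the net $(f_t)_{t>0}$ directed by increasing $t$ is automatically cofinal, so $t_\alpha\to+\infty$ holds for any convergent subnet and both limits you need are available along it.
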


\section{Details of $d-$weak convergence}\label{sec:WC}
To come up with our concept of weak convergence proposed in 
Definition~\ref{def:WeakConv}, we first looked at the behavior of some sequences
in metric spaces where explicit formulas for all metric functionals were 
available. We started on our analysis in the $\ell_p$ spaces with 
$1\leq p< \infty$, because in these spaces explicit formulas for all metric 
functionals were at our disposal, see \cite{Gutierrez2019-2}. So, we thought that 
if we were to use metric functionals to broadly test weak convergence, it should 
agree with the standard weak convergence. This was indeed the case as we will 
show in the next section.

Let us focus on our Definition~\ref{def:WeakConv}. Notice that we need all the 
metric functionals on $(X,d)$ to test $d-$weak convergence. It is also important 
to notice that only a limit inferior and an inequality appear in our notion of 
$d-$weak convergence. A reason for this choice is that a whole limit of the form 
$\h(x_n)\rightarrow \h(z)$ with internals $\h$ would obviously give strong limits. 
This was briefly discussed in \cite[Section~6]{Karlsson2021}.

After a quick look at the construction of metric functionals given in 
Section~\ref{ssec:Construction}, 
one may naturally wonder whether choosing a basepoint other than $o$ could 
completely modify $d-$weak convergence. We show next that the basepoint can be 
chosen freely.

\begin{prop} 
Let $(X,d)$ be a metric space. The $d-$weak convergence of sequences in $X$ does 
not depend on the choice of the basepoint upon which all the metric functionals 
on $X$ are built.
\end{prop}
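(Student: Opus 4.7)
My plan is to produce an explicit bijection between the two compactifications $X^{\diamondsuit}_o$ and $X^{\diamondsuit}_{o'}$ corresponding to two different basepoints $o$ and $o'$, and then check that the defining inequality \eqref{eq:LiminfIneq} is preserved under this bijection.

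\textbf{Step 1: The key identity on internals.} For any point $w\in X$ the two internals $\h^o_w(x):=d(x,w)-d(o,w)$ and $\h^{o'}_w(x):=d(x,w)-d(o',w)$ differ by a constant, and a short calculation gives
$$
\h^{o'}_w(x) \;=\; \h^o_w(x) - \h^o_w(o'), \qquad x\in X.
$$

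\textbf{Step 2: Passing to the closure.} If $(w_\alpha)$ is a net in $X$ with $\h^o_{w_\alpha}\to\h\in X^{\diamondsuit}_o$ pointwise, then in particular $\h^o_{w_\alpha}(o')\to\h(o')$, and hence by Step~1 the net $\h^{o'}_{w_\alpha}$ converges pointwise to the functional $\Phi(\h)$ defined by $\Phi(\h)(x):=\h(x)-\h(o')$. Thus $\Phi(\h)\in X^{\diamondsuit}_{o'}$. Swapping the roles of $o$ and $o'$ gives the inverse $\Psi(\h')(x):=\h'(x)-\h'(o)$. So $\Phi:X^{\diamondsuit}_o\to X^{\diamondsuit}_{o'}$ is a bijection, and every metric functional built from basepoint $o'$ differs from one built from $o$ only by an additive constant (depending on the functional).

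\textbf{Step 3: Invariance of the liminf inequality.} Suppose $x_n\to z$ $d$-weakly with respect to basepoint $o$. Given any $\h'\in X^{\diamondsuit}_{o'}$, write $\h'=\Phi(\h)$ for the unique $\h\in X^{\diamondsuit}_o$. Since subtracting the constant $\h(o')$ commutes with $\liminf$,
$$
\liminf_{n\to\infty}\h'(x_n) \;=\; \liminf_{n\to\infty}\h(x_n) - \h(o') \;\geq\; \h(z) - \h(o') \;=\; \h'(z).
$$
As $\h'$ was arbitrary, $x_n\to z$ $d$-weakly with respect to $o'$. The reverse implication follows symmetrically.

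The only thing to watch out for is Step~1--2, i.e.\ verifying that the pointwise limit used to produce an arbitrary $\h'\in X^{\diamondsuit}_{o'}$ really can be realized through the same net of points $w_\alpha$ that gives a limit in $X^{\diamondsuit}_o$; this is where one uses that $X^{\diamondsuit}_o$ is compact, so that after passing to a subnet the net $(\h^o_{w_\alpha})$ already converges. No other obstacle arises, and the argument does not require any hypothesis on $X$.
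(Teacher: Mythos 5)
Your argument is correct and is essentially the paper's own proof: both rest on the identity $d(\cdot,w)-d(o',w)=d(\cdot,w)-d(o,w)-[d(o',w)-d(o,w)]$, showing that metric functionals for the two basepoints differ by an additive constant, after which the liminf inequality transfers immediately. One small remark: the compactness of $X^{\diamondsuit}_o$ invoked at the end is not actually needed, since for any $\h'\in X^{\diamondsuit}_{o'}$ a defining net $(\h^{o'}_{w_\alpha})$ already forces pointwise convergence of $(\h^{o}_{w_\alpha})$ via $\h^{o}_{w_\alpha}(x)=\h^{o'}_{w_\alpha}(x)-\h^{o'}_{w_\alpha}(o)$.
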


\begin{proof}
We assume that the point $o$ in $X$ is the original basepoint as done in 
Section~\ref{ssec:Construction}. Choose now another basepoint $b$ in $X$. 
Let {\boldmath$\eta$} be a metric functional on $X$ that is built on the 
basepoint $b$. Then, there exists a metric functional $\h$ on $X$ that is 
built on the basepoint $o$ and such that 
$$
	\h(x) = \mbox{\boldmath$\eta$}(x) + \h(b),
$$
for all $x$ in $X$. This follows straightforwardly from the identity
$$
	d(\cdot,w) - d(o,w) = d(\cdot,w) - d(b,w) + [ d(b,w) - d(o,w)].
$$
Now, if a sequence $(x_n)_{n\geq 1}$ in $X$ converges $d-$weakly to a point $z$ 
in $X$, then we have
$$
	\liminf_{n\to\infty}\mbox{\boldmath$\eta$}(x_n) 
		= \liminf_{n\to\infty}\h(x_n) -\h(b)
		\geq \h(z) - \h(b) = \mbox{\boldmath$\eta$}(z).
$$ 
\end{proof} 

\begin{rem}
Suppose that we have $x_n \xrightharpoonup{\;d\;} z$. If one chooses $b:=z$ as the new 
basepoint, then $d-$weak convergence of the same sequence $(x_n)_{n\geq 1}$ reads 
$$
	\liminf_{n\to\infty}\mbox{\boldmath$\eta$}(x_n) \geq 0,
$$
for all metric functionals {\boldmath$\eta$} on $X$, which are now built on the 
basepoint $z$. 
\end{rem}

The result that we present next follows readily from the definition of $d-$weak 
convergence.

\begin{prop}
Let $(X,d)$ be a metric space. If a sequence $(x_n)_{n\geq 1}$ converges 
$d-$weakly to a point $z$ in $X$, then we have
\begin{equation}\label{eq:LimInfBoundedBelow}
	d(z,w) \leq \liminf_{n\to\infty}\,d(x_n,w),
\end{equation}
for all points $w$ in $X$.
\end{prop}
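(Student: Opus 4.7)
The plan is to apply Definition~\ref{def:WeakConv} to a single, carefully chosen metric functional, namely the internal $\h_w$ attached to the point $w$ at which we want to bound $\liminf_n d(x_n,w)$.

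First, I would recall that for the basepoint $o$ fixed in Section~\ref{ssec:Construction}, every point $w \in X$ gives an internal
$$
\h_w(x) = d(x,w) - d(o,w),
$$
and by construction every internal belongs to $X^\vee \subseteq X^\diamondsuit$, hence is a metric functional on $X$. Therefore $\h_w$ is one of the functionals against which inequality (\ref{eq:LiminfIneq}) must be tested.

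Next, since $(x_n)_{n\geq 1}$ converges $d$-weakly to $z$, applying (\ref{eq:LiminfIneq}) with $\h := \h_w$ yields
$$
\liminf_{n\to\infty}\bigl(d(x_n,w) - d(o,w)\bigr) \;\geq\; d(z,w) - d(o,w).
$$
The constant $d(o,w)$ cancels on both sides, and we obtain (\ref{eq:LimInfBoundedBelow}). Since $w$ was arbitrary, the inequality holds for all $w \in X$.

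There is essentially no obstacle: the entire content is the observation that each $\h_w$ is itself a valid test functional, so the defining liminf inequality specialises to the desired distance inequality after cancelling the basepoint term. The only conceptual point worth flagging is that this proof uses only internals, not arbitrary elements of $X^\diamondsuit$, which is why the statement holds in any metric space with no extra hypotheses.
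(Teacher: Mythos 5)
Your proof is correct and is exactly the paper's argument: the paper's proof consists of the single line ``Test $x_n \xrightharpoonup{\;d\;} z$ against all the internal metric functionals $\h_w$,'' and you have simply spelled out the cancellation of the constant $d(o,w)$. Nothing further is needed.
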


\begin{proof}
Test $x_n \xrightharpoonup{\;d\;} z$ against all the internal metric 
functionals $\h_w$.
\end{proof}

\begin{rem}
In normed linear spaces, the inequality (\ref{eq:LimInfBoundedBelow}) also holds
for sequences that converge weakly (in the standard weak 
topology). This is shown, however, by using the Hahn-Banach theorem.
\end{rem}

\begin{prop}   
Let $p$ and $q$ be two distinct points in a metric space $(X,d)$. Consider the 
sequence of points $(x_n)_{n\geq 1}$ in $X$ with $x_{2n}=p$ and $x_{2n-1} = q$ 
for all $n\geq 1$. Then, the sequence $(x_n)_{n\geq 1}$ does not converge 
$d-$weakly.
\end{prop}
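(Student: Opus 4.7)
The plan is to argue by contradiction: assume $x_n \xrightharpoonup{\;d\;} z$ for some point $z$ in $X$, and show that the defining inequality forces both $z=p$ and $z=q$, contradicting $p\neq q$. The proof should not require any nontrivial metric functional; testing against the internals $\h_w$ alone will suffice.

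First, I would exploit the simple structure of the sequence. Because $(x_n)_{n\geq 1}$ takes only the two values $p$ and $q$ infinitely often, for \emph{any} real-valued function $f$ on $X$ we have
$$
\liminf_{n\to\infty} f(x_n) = \min\{f(p),\, f(q)\}.
$$
Applied to an arbitrary metric functional $\h$, the hypothesis $x_n\xrightharpoonup{\;d\;} z$ therefore rewrites as $\min\{\h(p),\h(q)\} \geq \h(z)$.

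Next, I would restrict attention to internal metric functionals $\h_w(\cdot)=d(\cdot,w)-d(o,w)$. Substituting and cancelling the common term $d(o,w)$ gives
$$
\min\{d(p,w),\, d(q,w)\} \geq d(z,w),\quad\text{for all } w \in X.
$$
Evaluating this inequality at $w=p$ yields $0=\min\{0,d(p,q)\}\geq d(z,p)$, hence $z=p$. Evaluating it at $w=q$ yields $z=q$ by the same argument. This contradicts $p\neq q$, so no such $z$ exists.

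There is no real obstacle here; the only point worth underlining is that the argument uses nothing beyond the internal metric functionals and the nonnegativity of $d$. In particular, the conclusion holds in \emph{every} metric space, with no convexity, completeness, or compactness assumption on $X$.
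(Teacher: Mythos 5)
Your proof is correct and follows essentially the same route as the paper: both argue by contradiction, test against the internals at $p$ and $q$, and deduce the incompatible conclusions $z=p$ and $z=q$. The only cosmetic difference is that you phrase the computation uniformly via $\liminf_{n\to\infty} f(x_n)=\min\{f(p),f(q)\}$, while the paper handles the two internals one at a time.
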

 
\begin{proof}  
Suppose that the sequence $(x_n)_{n\geq 1}$ converges $d-$weakly to a point $z$ 
in $X$. We are going to show a contradiction with the help of both internals 
$\h_p$ and $\h_q$. Indeed, since we have $\h_p(x) > \h_p(p)$ for all $x\neq p$, 
our assumption would imply $z = p$. Then, we would have $\h_q(x_{2n-1}) < \h_q(z)$ 
for all integers $n\geq 1$. Thus, the sequence $(x_n)_{n\geq 1}$ cannot converge 
to $z$.
\end{proof}
  
\begin{prop}
Let $X$ be a set that contains infinitely many points and is equipped with the 
discrete metric $d$. Let $(x_n)_{n\geq 1}$ be a sequence in $X$. Then, we have 
the following possibilities:
\begin{enumerate}
    \item The sequence $(x_n)_{n\geq 1}$ is eventually constant. In other words, 
    the sequence converges strongly.
    \item If there are two subsequences $(x_{m_i})_{i\geq 1}$ and 
    $(x_{n_i})_{i\geq 1}$ where $x_{m_i}=p$ and $x_{n_i}=q$ 
    with $p \not=q$, then $(x_n)_{n\geq 1}$ does not converge $d-$weakly.
    \item If there exists only one point $z$ such that $x_n=z$ 
    happens infinitely often, then $(x_n)_{n \geq 1}$ converges $d-$weakly 
    to $z$ but it need not converge strongly.
    \item If there exists no $z$ for which $x_n=z$ infinitely many 
    times, then $(x_n)_{n \geq 1}$ converges $d-$weakly to all points in $X$.
\end{enumerate} 

\begin{proof}
We only need to prove the last two possibilities. This is done after noting that 
every metric functional on this space $(X,d)$ is either internal or the constant 
functional identically zero.
\end{proof}
\end{prop}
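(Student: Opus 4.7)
The crux, as the author's hint indicates, is the structural claim that on an infinite discrete space $(X,d)$ every metric functional is either an internal $\h_w$ (for some $w \in X$) or the zero functional $\mathbf{0}$. To establish this, I would fix the basepoint $o$ and note that each internal $\h_w$ takes values in a very restricted set: $\h_o(x) \in \{0,1\}$, while for $w \neq o$ we have $\h_w(x) = d(x,w)-1 \in \{-1,0\}$, with $-1$ attained only at $x = w$. Given a net $(w_\alpha)$ with $\h_{w_\alpha} \to \h$ pointwise, I would split on the values of $\h$: if $\h(x)=1$ for some $x \neq o$, then eventually $w_\alpha = o$, forcing $\h = \h_o$; if $\h(x) = -1$ for some $x$, then eventually $w_\alpha = x$, forcing $\h = \h_x$; otherwise $\h$ is identically zero. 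That $\mathbf{0}$ itself is a metric functional uses precisely the assumption that $X$ is infinite, since any net that eventually leaves every finite subset of $X$ drives each $\h_{w_\alpha}(x)$ to $0$.

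With the list of metric functionals pinned down, part~(3) reduces to checking $\liminf_n \h_w(x_n) \geq \h_w(z)$ for each $w \in X$, which, after cancelling the common constant $d(o,w)$, is equivalent to $\liminf_n d(x_n,w) \geq d(z,w)$. When $w = z$ the right-hand side is $0$; when $w \neq z$ the right-hand side equals $1$, and the hypothesis that $z$ is the \emph{only} point occurring infinitely often forces $x_n \neq w$ eventually, so $d(x_n,w) = 1$ eventually and the inequality holds. The test against $\mathbf{0}$ is vacuous. For part~(4), the stronger hypothesis that no point occurs infinitely often gives $x_n \neq w$ eventually for every single $w \in X$, so $\liminf_n d(x_n,w) = 1 \geq d(z,w)$ for every candidate limit $z$, yielding $d$-weak convergence to all points simultaneously.

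For the remaining assertion in part~(3) that $d$-weak convergence need not be strong, I would record an explicit example: take $X = \N$ with the discrete metric, and set $x_{2n} = 0$, $x_{2n-1} = n$. Then $0$ is the unique point appearing infinitely often, so $(x_n)_{n\geq 1}$ converges $d$-weakly to $0$ by the argument above, yet $d(x_{2n-1}, 0) = 1$ for every $n$, so the sequence does not converge strongly.

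The only real obstacle is the structural classification of $X^\diamondsuit$; once one knows the metric functionals are exhausted by $\{\h_w : w \in X\} \cup \{\mathbf{0}\}$, the remaining verifications are a routine bookkeeping exercise on a distance function taking only the values $0$ and $1$.
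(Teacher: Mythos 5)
Your proposal is correct and follows exactly the route the paper intends: the paper's proof consists solely of the observation that every metric functional on an infinite discrete space is either an internal or the zero functional, and you supply a valid verification of that classification (using that the internals take values only in $\{-1,0,1\}$ and that attaining $\pm 1$ pins down the net) together with the routine checks for parts (3) and (4) and a correct counterexample to strong convergence.
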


\begin{prop}
Let $(X,d)$ be a metric space. Then, every closed ball 
$$ B(q, r) = \{x\in X  \mid  d(x,q) \leq r \} $$ 
is $d-$weakly sequentially closed.
\end{prop}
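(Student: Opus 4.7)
The plan is to exploit the previous proposition, which states that if $x_n \xrightharpoonup{\;d\;} z$, then for every $w \in X$ we have $d(z,w) \leq \liminf_{n\to\infty} d(x_n,w)$. This is precisely the tool needed, because membership in a closed ball $B(q,r)$ is encoded by a single distance inequality against the center $q$.

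First I would take a sequence $(x_n)_{n\geq 1}$ inside $B(q,r)$ that converges $d$-weakly to some $z \in X$, and I want to conclude $z \in B(q,r)$. I would apply the previous proposition with the specific choice $w := q$, obtaining
$$
d(z,q) \leq \liminf_{n\to\infty} d(x_n,q).
$$
Since $x_n \in B(q,r)$ means $d(x_n,q) \leq r$ for every $n$, the right-hand side is at most $r$, and therefore $d(z,q) \leq r$, i.e. $z \in B(q,r)$.

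Essentially there is no real obstacle here: the hard work was already packaged into the earlier inequality (\ref{eq:LimInfBoundedBelow}), which in turn came from testing $d$-weak convergence against the internal metric functional $\h_q(\cdot) = d(\cdot,q) - d(o,q)$. If one prefers a self-contained argument, one can instead just write out $\liminf_{n\to\infty} \h_q(x_n) \geq \h_q(z)$ directly and add $d(o,q)$ to both sides, which is the same computation in different clothing. Either way, the proof is a one-line application of the definition.
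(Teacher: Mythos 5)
Your proof is correct and is essentially the same as the paper's: the paper also tests $d$-weak convergence against the internal $\h_q$, merely phrasing the computation as a proof by contradiction rather than a direct inequality. Routing the argument through the earlier inequality (\ref{eq:LimInfBoundedBelow}) is just a repackaging of that same one-line test.
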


\begin{proof}  
Suppose that $(x_n)_{n\geq 1}$ is a sequence in $B(q,r)$ and $z$ is a point in 
$X$ such that $x_n \xrightharpoonup{\;d\;} z$. Here $d-$weak convergence must 
be tested against all the metric functionals on the whole space $X$. We are 
going to show that $z$ is in $B(q,r)$. Indeed, if the point $z$ were not in 
$B(q,r)$, we would have $d(z,q) > r$. Thus, testing $d-$weak convergence against 
the internal $\h_q$ would imply the contradiction
$$ 
	r-d(o,q)\geq \liminf_{n\to\infty}\,\h_q(x_n) 
	\geq \h_q(z) >  r - d(o,q). 
$$
\end{proof}

\begin{rem}
Let $(X,d)$ be a metric space and $A$ be a subset of $X$. If we define the set 
$hull(A)$ as the intersection of all closed balls containing $A$, then $hull(A)$ 
is $d-$weakly sequentially closed.
\end{rem} 

\begin{ex}  
Let $X$ be a nonempty set equipped with the discrete metric. For a nonempty subset 
$A$ of $X$ we have 
$$
	hull(A) = \begin{cases} 
		A & |A| = 1 \\
		X & |A| > 1.
	\end{cases}
$$
Here $|A|$ means the number of elements of $A$.
\end{ex}

Let us now discuss relations between $d-$weak convergence and strong convergence.

\begin{prop}
Let $(X,d)$ be a metric space. If a sequence $(x_n)_{n\geq 1}$ in $X$ converges 
strongly, then it converges $d-$weakly.
\end{prop}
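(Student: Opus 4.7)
The plan is to invoke the $1$-Lipschitz property of metric functionals established in the third item of the proposition in Section~\ref{sec:MF}. Suppose $(x_n)_{n\geq 1}$ converges strongly to a point $z$ in $X$, that is, $d(x_n, z) \to 0$ as $n \to \infty$. For any metric functional $\h$ on $X$, the $1$-Lipschitz bound
$$
\abs{\h(x_n) - \h(z)} \leq d(x_n, z)
$$
forces $\h(x_n) \to \h(z)$ in $\R$.

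Once full numerical convergence is available, the $d$-weak inequality (\ref{eq:LiminfIneq}) holds trivially: one has
$$
\liminf_{n\to\infty} \h(x_n) \;=\; \lim_{n\to\infty} \h(x_n) \;=\; \h(z) \;\geq\; \h(z).
$$
Since $\h$ was an arbitrary metric functional in $X^{\diamondsuit}$, we conclude that $x_n \xrightharpoonup{\;d\;} z$ by Definition~\ref{def:WeakConv}.

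There is no real obstacle in this argument; the statement is essentially a direct corollary of the fact that every element of $X^{\diamondsuit}$ is automatically continuous (indeed $1$-Lipschitz). The only observation worth emphasizing is that strong convergence actually yields something strictly stronger than the defining $\liminf$ inequality, namely the two-sided limit $\h(x_n) \to \h(z)$ for every metric functional $\h$; this is consistent with the remark in Section~\ref{sec:WC} that testing $d$-weak convergence with a full limit against internals already collapses to strong convergence.
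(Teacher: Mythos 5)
Your proof is correct and follows essentially the same route as the paper: both rest on the $1$-Lipschitz property of metric functionals, the only (immaterial) difference being that you derive the full limit $\h(x_n)\to\h(z)$ while the paper uses only the one-sided bound $\h(x_n)\geq \h(z)-d(x_n,z)$ to get the $\liminf$ inequality directly.
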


\begin{proof}
Suppose that the sequence $(x_n)_{n\geq 1}$ converges strongly to a point $z$ in 
$X$. Let $\h$ be a metric functional on $X$. Since all metric functionals are 
$1-$Lipschitz, we have $\h(x_n) \geq -d(x_n,z) + \h(z)$ for all $n\geq 1$. From 
this we get 
$$
	\liminf_{n\to\infty}\h(x_n)\geq \h(z).
$$
\end{proof}

When does $d-$weak convergence imply strong convergence? 

In general, a sequence may have more than one $d-$weak limit. The following
result states that there are special cases where we have a unique $d-$weak 
limit which is also a strong one.

\begin{prop}
Suppose that $(X,d)$ is such that all its closed balls are compact. If a bounded
sequence in $X$ converges $d-$weakly, then it does so strongly.
\end{prop}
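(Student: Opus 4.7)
My plan is to combine the compactness of closed balls with the standard ``subsequence principle'': a sequence converges strongly to $z$ provided every subsequence has a further subsequence converging strongly to $z$. So I would let $(x_n)_{n\geq 1}$ be a bounded sequence with $x_n \xrightharpoonup{\;d\;} z$ and fix a closed ball $B(q,r)$ containing the sequence; by hypothesis this ball is compact, hence so is every subsequence's closure.

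Given any subsequence $(x_{n_k})_{k\geq 1}$, compactness produces a further subsequence, which I would relabel $(x_{n_k})_{k\geq 1}$, that converges strongly to some point $y$ in $B(q,r)$. The key step is to show $y=z$. For this I would invoke the inequality (\ref{eq:LimInfBoundedBelow}), proved earlier in this section from $d$-weak convergence, applied to the point $w=y$ and to the original sequence:
$$
	d(z,y) \leq \liminf_{n\to\infty} d(x_n,y) \leq \liminf_{k\to\infty} d(x_{n_k},y) = 0,
$$
so $y=z$. (Equivalently, one could test the $d$-weak convergence of $(x_n)$ against the internal $\h_y$ and use $\h_y(y)=-d(o,y)$.)

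Once every subsequence of $(x_n)_{n\geq 1}$ has a further subsequence converging strongly to the same point $z$, a standard argument by contradiction finishes the proof: if $(x_n)_{n\geq 1}$ did not converge strongly to $z$, there would exist $\varepsilon>0$ and a subsequence $(x_{n_k})_{k\geq 1}$ with $d(x_{n_k},z)\geq \varepsilon$ for all $k$, contradicting the existence of a further subsequence converging strongly to $z$. The main (and only nontrivial) obstacle is step two, namely using the $d$-weak convergence of the \emph{whole} sequence to force any strong subsequential limit to coincide with the $d$-weak limit $z$; this is precisely where (\ref{eq:LimInfBoundedBelow}) is essential.
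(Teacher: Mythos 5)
Your proof is correct and follows essentially the same route as the paper: extract a strongly convergent subsequence via compactness of closed balls, then use the internal functionals (equivalently, inequality (\ref{eq:LimInfBoundedBelow})) to force the subsequential limit to equal the $d$-weak limit $z$. The paper phrases this as a direct contradiction with a fixed $\epsilon$ rather than via the subsequence principle, but the substance is identical.
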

\begin{proof}
Let $(x_n)_{n\geq 1}$ be a bounded sequence in $X$ such that 
$x_n \xrightharpoonup{\;d\;} z$. Suppose that there exists $\epsilon>0$ such that 
$d(x_n,z)>\epsilon$ for infinitely many $n$. Since all closed balls are compact, 
there is a subsequence $(x_{n_i})_{i\geq 1}$ and a point $p$ in $X$ such that 
$d(x_{n_i},p) \rightarrow 0$. By testing $d-$weak convergence on the internal 
$\h_p$, we get
$$
0\geq \liminf_{n\to\infty}\,d(x_n,p) \geq d(z,p).
$$
Thus, we have $z=p$, which contradicts the existence of such $\epsilon$.
\end{proof}

A quick inspection of the explicit formulas for all metric functionals on 
$\ell_1$ shown in \cite[Theorem~3.6]{Gutierrez2019-2} reveals the following.

\begin{prop}\label{prop:dWeakStrongl1}
If a sequence in $\ell_1$ converges $d-$weakly, then it converges strongly. 
\end{prop}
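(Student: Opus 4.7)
The plan is to chain together the earlier theorems on $d$-weak convergence with the classical Schur property of $\ell_1$. Suppose $(x_n)_{n\geq 1}$ converges $d$-weakly in $\ell_1$ to some $z$. Since $\ell_1$ appears in the list of Theorem~\ref{thm:UnbNoWeak}, the sequence $(x_n)_{n\geq 1}$ must be bounded. With boundedness in hand, Theorem~\ref{thm:dWeakANDstWeak} applies and gives that $x_n \to z$ weakly in the standard sense of the dual pairing with $\ell_\infty$. Finally, by Schur's classical theorem every weakly convergent sequence in $\ell_1$ is norm-convergent, so $\norm{x_n-z}_1 \to 0$.

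The author's hint about \emph{inspecting the explicit formulas} from \cite{Gutierrez2019-2} suggests a more self-contained version of this chain. The key observation is that, for every sign sequence $\sigma \in \{-1,+1\}^{\N}$, the functional $\h_\sigma(x)=\sum_k \sigma_k x_k$ is a metric functional on $\ell_1$: it arises as the pointwise limit of the internals $\h_{w^{(n)}}$, where $(w^{(n)})_k=-n\sigma_k$ for $k\leq n$ and $(w^{(n)})_k=0$ otherwise (the computation $\norm{x-w^{(n)}}_1 - \norm{w^{(n)}}_1 = \sum_{k\leq n}\sigma_k x_k + \sum_{k>n}|x_k|$ for $n$ large makes this explicit). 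Applying $d$-weak convergence to both $\h_\sigma$ and $\h_{-\sigma}$ forces $\sum_k \sigma_k\bigl((x_n)_k-z_k\bigr)\to 0$ for every $\sigma$. Integrating against an appropriate product measure on $\{-1,+1\}^{\N}$ promotes this to $\sum_k a_k\bigl((x_n)_k - z_k\bigr)\to 0$ for every $a\in\ell_\infty$, i.e.\ standard weak convergence of $x_n$ to $z$.

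The main (and essentially only) nontrivial step is the Schur passage from the family of pointwise-in-$\sigma$ relations to $\norm{x_n-z}_1 \to 0$. Schur's property admits a classical sliding-hump proof, and in a more hands-on variant one could in principle extract it from the structure of the explicit formulas themselves rather than cite it. In the plan above, Schur is outsourced to a standard citation, which reduces the proposition to a short combination of Theorems~\ref{thm:UnbNoWeak} and \ref{thm:dWeakANDstWeak} together with Schur's theorem.
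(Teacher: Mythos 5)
Your first chain is circular within this paper's logical structure: the $\ell_1$ case of Theorem~\ref{thm:UnbNoWeak} is proved \emph{from} Proposition~\ref{prop:dWeakStrongl1} (``In $\ell_1$, the claim follows from Proposition~\ref{prop:dWeakStrongl1}''), so you cannot invoke Theorem~\ref{thm:UnbNoWeak} to obtain the boundedness needed for Theorem~\ref{thm:dWeakANDstWeak}. Unless you supply an independent proof that $d$-weakly convergent sequences in $\ell_1$ are bounded, the reduction to Schur's theorem does not get off the ground. Your second, ``self-contained'' route has the same hidden dependence on boundedness: the sign functionals $\h_\sigma$ are indeed metric functionals (your computation with $w^{(n)}$ is correct, and testing against $\pm\h_\sigma$ does give $\langle\sigma, x_n-z\rangle\to 0$ for every $\sigma$), but the step ``integrating against a product measure promotes this to all $a\in\ell_\infty$'' requires dominated convergence with dominating function $\norm{x_n-z}_1$, i.e.\ boundedness again (this is exactly the hypothesis in Rainwater's theorem). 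So as written, neither route closes.

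The fix is to not pass through standard weak convergence at all, and this is what the paper does: from the linear metric functionals one first extracts only \emph{coordinatewise} convergence $y_n(k)\to y(k)$; then, assuming $\limsup\norm{y_n-y}_1>\epsilon$, the gliding hump construction (Beauzamy) produces a single sign pattern $c\in\{-1,0,1\}^{\N}$ and a subsequence with $\abs{\sum_m c(m)(y_{n_p}(m)-y(m))}\geq\epsilon/4$; since $\sum_{m}c(m)x(m)$ and its negative are both metric functionals of the form \eqref{eq:LinearMFl1}, testing $d$-weak convergence against this one pair yields the contradiction directly. No boundedness, no measure-theoretic averaging, and no appeal to the full Schur property are needed --- the hump functional itself is a metric functional, which is the whole point of the proof. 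You correctly identify the sliding hump as the crux, but in your write-up it is deferred to a citation of Schur's theorem applied to a weak convergence you have not actually established.
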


\begin{proof}
From the formulas shown in \cite[Theorem~3.6]{Gutierrez2019-2}, we notice that 
some of the metric functionals on $\ell_1$ are linear: those are of the form
\begin{equation}\label{eq:LinearMFl1}
	\h(x) = \sum_{k\in I}\varepsilon(k) x(k),
\end{equation}
where $I$ is a nonempty subset of $\N$ and $\varepsilon(k)\in\{-1,1\}$ for all 
$k$ in $I$. Now, let $(y_n)_{n\geq 1}$ be a sequence in $\ell_1$ that converges
$d-$weakly to a point $y$ in $\ell_1$. Thus, for every $k\in \N$ we 
have 
$$
	\lim_{n\to\infty}\,y_n(k) = y(k).
$$
Define $x_n := y_n - y$ and assume that there exists a positive real number 
$\epsilon$ such that $\limsup_{n\to\infty}\norm{x_n} > \epsilon$. What we do next 
is to apply the so-called gliding hump technique. We follow the exact details 
that appear in the book of B.~Beauzamy \cite[pp.~117-118]{Beauzamy1982}.
The gliding hump technique gives a sequence of positive integers 
$(n_p)_{p \geq 1}$ and a sequence $(c(m))_{m\geq 1}$ in $\{-1,0,1\}$ such that
\begin{equation}\label{eq:ConseqGlidingHump}
	\Big|\sum_{m\geq 1}\,c(m)x_{n_p}(m)\Big| 
	\geq \epsilon / 4,\quad\text{if}\quad p\geq 3.
\end{equation}

On the other hand, if we define $I:=\{k\in\N \mid c(k)\neq 0\}$ and 
$\varepsilon(k):=c(k)$ for all $k$ in $I$, we obtain a metric functional $\h$ of 
the form (\ref{eq:LinearMFl1}). Clearly, $-\h$ is also a metric functional of 
the form (\ref{eq:LinearMFl1}). By testing $d-$weak convergence of 
$(y_n)_{n\geq 1}$ against these two metric functionals, we get
$$
	\lim_{n\to\infty}\,\sum_{m\in I}\varepsilon(m)y_{n_p}(m) 
	= \sum_{m\in I}\varepsilon(m)y(m).
$$
This contradicts the inequality (\ref{eq:ConseqGlidingHump}). 
\end{proof}

In uniformly convex normed linear spaces the following statement holds.

\begin{prop}
Let $(E,\norm{\cdot})$ be a uniformly convex normed linear space. Assume that 
$X=B_E$, the closed unit ball of $E$. If a sequence $(x_n)_{n\geq 1}$ in $X$ is 
such that $ \norm{x_n}  \rightarrow \norm{\hat x} $ and that for all $w$ in
$X$ we have 
$$
	\liminf_{n\to\infty}\,\h_w(x_n)  \geq \h_w(\hat x),
$$
then $ \norm{ x_n - \hat x }  \rightarrow 0$.
\end{prop}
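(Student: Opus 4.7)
The hypothesis, once internal metric functionals are written out with basepoint $0$, reads $\liminf_{n\to\infty} \|x_n - w\| \geq \|\hat x - w\|$ for every $w\in B_E$, together with $\|x_n\|\to\|\hat x\|$. The case $\hat x = 0$ is immediate since then $\|x_n\|\to 0$ gives strong convergence directly. So the plan is to assume $\hat x\neq 0$, set $r := \|\hat x\|\in(0,1]$ and $u := \hat x/\|\hat x\|$, and exploit uniform convexity applied to the unit vectors $u$ and $x_n/\|x_n\|$ (which are well-defined for large $n$ since $\|x_n\|\to r>0$).

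The first step is to feed the test point $w := -u$ into the hypothesis. Since $\|-u\|=1$, this $w$ lies in $X = B_E$, and $\|\hat x - w\| = \|\hat x + u\| = (1+r)\|u\| = 1+r$. Therefore $\liminf_{n\to\infty}\|x_n + u\| \geq 1+r$; the reverse estimate $\|x_n+u\|\leq \|x_n\|+1 \to r+1$ is automatic, so $\|x_n + u\| \to 1+r$.

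Next I would transfer this to the normalized vectors. Writing $x_n/\|x_n\| + u = (x_n + \|x_n\|u)/\|x_n\|$ and using the triangle inequality
\[
\|x_n + \|x_n\|\,u\| \;\geq\; \|x_n + u\| - |1-\|x_n\||\,,
\]
and the fact that $|1-\|x_n\|| \to 1-r$ (using $r\leq 1$), the right-hand side tends to $(1+r)-(1-r) = 2r$. Dividing by $\|x_n\|\to r$ yields $\liminf_{n\to\infty}\|x_n/\|x_n\| + u\| \geq 2$, while the trivial upper bound gives $\leq 2$, so $\|x_n/\|x_n\| + u\|\to 2$. The unit vectors $x_n/\|x_n\|$ and $u$ now satisfy the hypothesis of uniform convexity, which forces $\|x_n/\|x_n\| - u\| \to 0$. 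Finally, the bound
\[
\|x_n - \hat x\| \;=\; \bigl\|x_n - \|x_n\|\,u + (\|x_n\|-r)u\bigr\| \;\leq\; \|x_n\|\cdot\bigl\|x_n/\|x_n\| - u\bigr\| + \bigl|\|x_n\|-r\bigr|
\]
tends to $0$, giving the desired strong convergence.

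The only real obstacle is the interplay of the triangle estimate with the normalization in the third step; one has to verify carefully that the lower bound for $\|x_n/\|x_n\|+u\|$ really does reach $2$ in the limit, so that uniform convexity can be invoked with no slack. Everything else is bookkeeping with the triangle inequality and the hypothesis $\|x_n\|\to\|\hat x\|$.
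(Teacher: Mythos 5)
Your proof is correct and follows essentially the same route as the paper's: both test the hypothesis against a single internal based at a point antipodal to $\hat x$ (you use $-\hat x/\norm{\hat x}$, the paper uses $-\hat x$), handle the mismatch between $\norm{x_n}$ and $\norm{\hat x}$ by a normalization step, and conclude via uniform convexity. The only cosmetic difference is that you normalize to the unit sphere and invoke uniform convexity there in one pass, whereas the paper first treats the case $\norm{x_n}=\norm{\hat x}$ and then reduces to it by passing to $y_n = (\norm{\hat x}/\norm{x_n})\,x_n$.
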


\begin{proof} 
We may assume that $\hat x \not=0$ as otherwise the claim is trivial. First,
suppose that we have $\norm{x_n} = \norm{ \hat x }$ for all $n\geq 1$, but there 
is a positive real number $\epsilon$ such that
$$
	\limsup_{n\to\infty} \norm{x_n - \hat x }  > \epsilon\norm{\hat x}.
$$ 
By uniform convexity, there would exist a positive real number $\delta$ 
such that 
$$
\norm{ (x_n + \hat x)/2 }  \leq (1-\delta) \norm{\hat x }.
$$
Testing $d-$weak convergence of the sequence $(x_n)_{n\geq 1}$ with the 
internal $\h_{-\hat x}$ gives
$$
\liminf_{n\to\infty} \norm{ ( x_n + \hat x)/2 } \geq \norm{\hat x},
$$ 
which shows that such $\epsilon $ cannot exist. 

Now, if we only have  $ \norm{ x_n } \rightarrow \norm{\hat x }$, we can 
move to $y_n :=  \frac { \norm{ \hat x }}{\norm{x_n} } x_n$ so that 
$\norm{y_n -  x_n} \rightarrow 0$. By testing $d-$weak convergence of the 
sequence $(x_n)_{n\geq 1}$ with the internal $\h_{-\hat x}$, we get 
$$
	\liminf_{n\to\infty}\norm{ y_n + \hat x} 
	\geq \liminf_{n\to\infty}\Big|\norm{x_n + \hat x} - \norm{x_n - y_n} \Big| 
	\geq 2\norm{\hat x}.
$$ 
Then, the limit $\norm{y_n -  \hat x} \rightarrow 0$ holds. Hence, 
we have $\norm{x_n -  \hat x} \rightarrow 0$.
\end{proof} 

\begin{rem}
In general, there are more elements in $(B_E)^{\diamondsuit}$ than just the 
internals. In the previous proposition, testing $d-$weak convergence only 
against internals was enough. Also, the same proof holds in the whole space $E$.
\end{rem}

Next, we want to discuss some behavior of $d-$weak convergence in closed balls
of $\ell_2$ and $\ell_1$. For this purpose, we define the following.

\begin{defn}  
Let $(X,d)$ be a metric space. For a given sequence $(x_n)_{n\geq 1}$ in $X$ we 
define the set
$$
	\Lambda_d(x_n) := \{ z\in X \mid \ x_n \xrightharpoonup{\;d\;} z\}.
$$
\end{defn}

\begin{ex}  
Let $X$ be the closed unit ball of $\ell_2$. Following the construction of metric 
functionals on Hilbert spaces \cite{Gutierrez2019-2}, we note that all metric 
functionals on $X$ are of the form
$$
	\h(x)  = ( \norm{x}^2 - 2\dpair{x}{z} +c^2 )^{1/2} -c,
$$
where $\norm{z} \leq c \leq 1$. For all such metric functionals $\h$ we have  
$$
	\sqrt{5}/2 -1 \leq \lim_{n\to\infty} \h(e_n/2) \leq 1/2.
$$
Thus, we have $\Lambda_d(2^{-1}e_n) = (\sqrt{5}/2 -1)X$. In general, if we 
consider the sequence $(x_n)_{n\geq 1}$ in $X$ with $x_n = \theta e_n$ and 
$0<\theta\leq 1$, then we observe that a point $u$ is in $\Lambda_d(\theta e_n)$ 
if and only if $\norm{u}^2 + 2\norm{u} \leq \theta ^2$. Clearly, we cannot have 
$\norm{x_n} \rightarrow \norm{u}$.
\end{ex} 

\begin{ex}
Suppose that the set $X=\{ 0, e_1, e_2, \cdots \}$ is equipped 
with the metric $d$ induced by the norm of $\ell_1$. All the metric functionals
on $X$ are internal: $\h_0(x)$ and $\h_{e_j}$ for all $j\geq 1$. Thus, we have 
$\Lambda_d(e_n) = X$.
\end{ex}

\begin{ex}  
Let $X$ be the closed unit ball of $\ell_1$. Then, all the metric functionals 
on $X$ are internal \cite{Gutierrez2019-2}, and hence we have 
$\Lambda_d(2^{-1}e_n) = 2^{-1}X$. To see this it suffices to note that for all 
$w$ in $X$ we have
$$
	\h_w(2^{-1}e_n) = \norm{2^{-1}e_n -w}_{1} - \norm{w}_{1} \rightarrow  1/2.
$$
\end{ex}

Before we close this section we want to point out the difference between our notion 
of $d-$weak convergence in metric spaces and the one proposed by T.~C. Lim \cite{Lim1977}; 
the so called $\Delta-$convergence.

A sequence $(x_n)_{n\geq 1}$ in a metric space $(X,d)$ is said to $\Delta-$converge
to a point $z$ in $X$ if  for every $y$ in $X$ one has $d(z, x_n) \leq d(y, x_n) + o(1)$
as $n\to\infty$. It is known that when $X=\ell_2$ the standard weak convergence 
agrees with $\Delta-$convergence for bounded sequences. However, in $X=\ell_1$ these 
concepts do not agree. To see this, consider the sequence $(x_n)_{n\geq 1}$ in
$\ell_1$ where $x_n = e_n$ for all $n\geq 1$. While the sequence does not converge 
$d-$weakly, it $\Delta-$converges to $0$. Thus, the concept of $\Delta-$convergence 
fails to agree with weak convergence of bounded sequences in some Banach spaces. 

\section{Proofs of the main theorems}

\begin{proof}[\bf Proof of Theorem \ref{thm:unique}]
Suppose that there are two distinct vectors $u$ and $v$ in $X$ for which we have
\begin{equation}\label{eq:twolimits}
	x_n \xrightharpoonup{\;d\;} u \quad\text{and}\quad x_n \xrightharpoonup{\;d\;} v. 
\end{equation}
Let $B_{X^*}$ denote the closed unit ball of the dual space $X^*$. Let 
$\mathcal{E}(B_{X^*})$ denote the set of all extreme points of $B_{X^*}$. The
inclusion $\mathcal{E}(B_{X^*}) \subset X^\diamondsuit$ holds as a result of
the statement in \cite[Corollary 3.5]{Walsh2018}. Thus, testing $d-$weak 
convergence in (\ref{eq:twolimits}) against every element $g$ in 
$\mathcal{E}(B_{X^*})$ gives
$$
\lim_{n\to\infty}\,\dpair{x_n}{g}=\dpair{u}{g}=\dpair{v}{g}.
$$
Clearly, we have $\dpair{u}{f}=\dpair{v}{f}$ for all $f$ in the convex hull of
$\mathcal{E}(B_{X^*})$.
Finally, the Krein-Milman theorem implies that the equality 
$\dpair{u}{f}=\dpair{v}{f}$ holds for all $f\in X^*$. This is a contradiction 
because continuous linear functionals separate points of $X$.
\end{proof}

\begin{proof}[\bf Proof of Theorem \ref{thm:dWeakANDstWeak}]
Assume that $(x_n)_{n\geq 1}$ converges weakly to $x$. Suppose now that there is a metric 
functional $\h^* \in X^\diamondsuit$ such that
\begin{equation}\label{eq:StrictIneq}
	\liminf_{n\to\infty} \h^*(x_n) < \h^*(x).    
\end{equation}
Clearly, $\h^*$ is not the trivial zero functional. Denote by $A$ the liminf in the inequality 
(\ref{eq:StrictIneq}). Since $\h^*$ is a metric funtional, we have $\h^*(x_n)\geq -\norm{x_n}$ 
for all $n\geq 1$, hence $A$ is finite. We pick a subsequence $(x_{n_i})_{i\geq 1}$ such that 
$\h^*(x_{n_i})\to A$. Since $A < \h^*(x)$, there exists a positive real number $\epsilon$ such 
that $A+\epsilon < \h^*(x)$. Then, there exists a positive integer $i_0$ such that for every
$i\geq i_0$ we have
\begin{equation}\label{eq:PointsIOX}
	\h^*(x_{n_i}) \leq A +\epsilon <\h^*(x).
\end{equation}
Now, consider the set 
$$ 
	Y = \{ y\in X \mid \h^*(y) \leq A+\epsilon \}.
$$
Since $\h^*$ is a metric functional, $Y$ is a closed convex subset of $X$. From 
(\ref{eq:PointsIOX}) it follows that $Y$ is nonempty and $x\in X\setminus Y$. A classical 
separation theorem implies that there exists a continuous linear functional $f\in X^*$ and a real 
number $B$ such that $\dpair{x_{n_i}}{f} \geq B > \dpair{x}{f}$ for all $i \geq i_0$. This 
contradicts the assumption of weak convergence. So, such a metric functional $\h^*$ that 
satisfies (\ref{eq:StrictIneq}) cannot exists. 
	
Now, we are going to prove the other direction of our claim. We assume that for every metric 
functional $\h\in X^\diamondsuit$ we have
\begin{equation}\label{eq:2SeqLimInf}
	\liminf_{n\to\infty}\h(x_n)\geq \h(x).
\end{equation}
By a result of Walsh (\cite[Corollary 3.5]{Walsh2018}), extreme points of the closed 
unit ball $B_{X^*}$ of $X^*$ are elements in $X^\diamondsuit$. Thus, the inequality (\ref{eq:2SeqLimInf})
holds for all extreme points $\pm a^*$ of $B_{X^*}$. Then, we have
$$ 
	\lim_{n\to\infty}\dpair{-x_n}{a^*} =  \dpair{-x}{a^*}.
$$
Since $(x_n)$ is bounded, the above equality is equivalent to weak convergence due to Rainwater's
theorem \cite{Rainwater1963}.
\end{proof}

\begin{proof}[\bf Proof of Theorem \ref{thm:UnbNoWeak}]
In $\ell_1$, the claim follows from Proposition~\ref{prop:dWeakStrongl1}.

Consider now the space $C[0,1]$ equipped with the sup-norm. For each $t$ in 
$[0,1]$, both functionals $f\mapsto f(t)$ and $f\mapsto -f(t)$ are metric 
functionals on $C[0,1]$. This follows from the explicit formulas given in 
\cite[Theorem~5.1]{Walsh2018}. If a sequence $(f_n)_{n\geq 1}$ in $C[0,1]$ is 
unbounded, then there is a real number $\tau$ in $[0,1]$ and a sequence of 
positive integers $(n_i)_{i\geq 1}$ such that
$$
	\abs{f_{n_i}(\tau)} \rightarrow \infty.
$$
If we have $f_{n_i}(\tau) \rightarrow -\infty $, use the metric functional 
$\h(f)=f(\tau)$ to get $\h(f_{n_i})\rightarrow -\infty$. Thus, the sequence
$(f_n)_{n\geq 1}$ cannot converge $d-$weakly. If we have 
$f_{n_i}(\tau) \rightarrow +\infty $, use the metric functional $\h(f)=-f(\tau)$
instead.

Finally, assume that $X$ is a normed linear space whose dual $X^*$ is
strictly convex. We know that extreme points of the closed unit ball $B_{X^*}$
are metric functionals \cite[Corollary 3.5]{Walsh2018}. It is also known that,
in the present case, all continuous linear functionals of norm 1 are extreme points
of $B_{X^*}$. Thus, every sequence in $X$ that converges $d-$weakly, also converges  
in the standard weak topology, and hence must be bounded. 
\end{proof}

To prove Theorem~\ref{thm:dWeakOfLinComb} we need the following result.

\begin{lem}\label{lem:MFcomb}
Let $(X,\norm{\cdot})$ be a normed linear space. Suppose that $s$ and $t$ are
real numbers and $v$ is a vector in $X$. If $\h$ is a metric functional on $X$,
then there exists a metric functional {\boldmath$\eta$} on $X$ such that 
$$
	\h(sx + tv) = \abs{s}\mbox{\boldmath$\eta$}(x) + \h(tv),
$$
for all $x$ in $X$.
\end{lem}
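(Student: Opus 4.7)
The plan is to realize the target metric functional $\boldsymbol{\eta}$ as a limit of internals constructed from those approximating $\h$, using a simple rescaling trick. The case $s=0$ is trivial since both sides reduce to $\h(tv)$ for any choice of $\boldsymbol{\eta}$, so I would assume $s\neq 0$ throughout. By part (2) of the proposition in Section~\ref{ssec:Construction}, write $\h$ as the pointwise limit of a net of internals $\h_{w_\alpha}(y) = \norm{y - w_\alpha} - \norm{w_\alpha}$ (with basepoint $0$).

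The key observation is the following rescaling identity: for any $s\neq 0$,
$$
\norm{sx+tv-w_\alpha} \;=\; \abs{s}\,\norm{x - z_\alpha},\quad\text{where}\quad z_\alpha := \frac{w_\alpha - tv}{s}.
$$
Setting $x=0$ gives $\norm{tv - w_\alpha} = \abs{s}\norm{z_\alpha}$. Subtracting these two relations and inserting the definition of $\h_{w_\alpha}$ yields, after a short calculation, the pre-limit identity
$$
\h_{w_\alpha}(sx+tv) \;=\; \abs{s}\,\h_{z_\alpha}(x) + \h_{w_\alpha}(tv),
$$
valid for every $x$ in $X$, where $\h_{z_\alpha}$ is itself an internal metric functional.

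The final step is to pass to the limit. The left side converges pointwise to $\h(sx+tv)$ and the second term on the right converges to $\h(tv)$, so $\abs{s}\,\h_{z_\alpha}(x)$ has a pointwise limit on $X$. Since $X^\diamondsuit$ is compact (part (2) of the proposition in Section~\ref{ssec:Construction}), the net $(\h_{z_\alpha})$ admits a subnet $(\h_{z_\beta})$ that converges pointwise to some metric functional $\boldsymbol{\eta} \in X^\diamondsuit$. Passing to this subnet in the pre-limit identity gives
$$
\h(sx+tv) \;=\; \abs{s}\,\boldsymbol{\eta}(x) + \h(tv),\quad\text{for all}\;x\;\text{in}\;X,
$$
as required. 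No real obstacle is expected; the only point demanding a little care is the bookkeeping around the sign of $s$ in the rescaling $\norm{sx+tv-w_\alpha}=\abs{s}\norm{x-z_\alpha}$, which is what produces the absolute value $\abs{s}$ in the conclusion.
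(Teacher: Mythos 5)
Your proposal is correct and follows essentially the same route as the paper: the same rescaled net $z_\alpha = s^{-1}(w_\alpha - tv)$, the same pre-limit identity, and a limit passage at the end. Your explicit appeal to compactness of $X^\diamondsuit$ to identify the limit of $(\h_{z_\alpha})$ as a metric functional is a slightly more careful rendering of a step the paper leaves implicit, but it is not a different argument.
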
 

\begin{proof}
The claim trivially holds for the case $s=0$. Now, suppose that $s$ is a nonzero 
real number. If $\h$ is a metric functional on $X$, then there is a net 
$(w_\alpha)$ in $X$ such that
$$
	\h(\cdot)= \lim_{\alpha}\, (\norm{\,\cdot\,- w_\alpha} - \norm{w_\alpha}).
$$
Consider the net $(z_\alpha)$ in $X$ with $z_\alpha = s^{-1}(w_\alpha - tv)$. 
Then, for every $x$ in $X$ we have
$$
	\norm{sx + tv - w_\alpha}- \norm{w_\alpha} = \abs{s}(\norm{x - z_\alpha} 
		-\norm{z_\alpha}) + (\norm{tv - w_\alpha} 
		- \norm{w_\alpha}).
$$
Thus, our claim holds with
$$
	\mbox{\boldmath$\eta$}(x) = \lim_{\alpha}(\norm{x - z_\alpha} 
	-\norm{z_\alpha}).
$$
\end{proof}

\begin{proof}[\bf Proof of Theorem \ref{thm:dWeakOfLinComb}]
Let $\h$ be an arbitrary metric functional on $X$. By Lemma~\ref{lem:MFcomb}, there 
exists a metric functional {\boldmath$\eta$} on $X$ such that
$$
\h(sx + tv) = \abs{s}\mbox{\boldmath$\eta$}(x) + \h(tv),
$$
for all $x$ in $X$. Thus, the hypothesis $x_n \xrightharpoonup{\;d\;} u$ implies
\begin{align*}
	\liminf_{n\to\infty}\,\h(sx_n + tv) 
		&= \abs{s}\liminf_{n\to\infty}\,\mbox{\boldmath$\eta$}(x_n) + \h(tv) \\
		&\geq \abs{s}\mbox{\boldmath$\eta$}(u) + \h(tv) \\
		&= \h(su + tv). 
\end{align*}
Now, since $\h$ is a metric functional and the hypothesis $d(y_n,v) \rightarrow 0$ 
holds, we have
$$
	\liminf_{n\to\infty}\,\h(sx_n + ty_n) = \liminf_{n\to\infty}\,\h(sx_n + tv).
$$ 
Therefore, we have $sx_n + ty_n \xrightharpoonup{\;d\;} su + tv$.
\end{proof}

\begin{proof}[\bf Proof of Theorem \ref{thm:dWeakInWconvex}]
	
Denote by $\Lambda_d(x_n)$ the set of all points $z$ such that
$x_n \xrightharpoonup{\;d\;} z$.
We prove first that the set $\Lambda_d(x_n)$ is convex. Let $z_1$ and $z_2$ be
two points in $\Lambda_d(x_n)$ and $s$ be a real number in the interval 
$[0,1]$. Let $\h$ be a metric functional on the $W-$convex metric space $(X,d)$.
By Proposition~\ref{prop:ConvexMF}, the metric functional $\h$ is $W-$convex. Thus, 
we have
\begin{align*}
	\h(W(z_1,z_2,s)) &\leq (1-s)\h(z_1) + s\h(z_2) \\
	&\leq (1-s)\liminf_{n\to\infty}\,\h(x_n)
	+ s\liminf_{n\to\infty}\,\h(x_n) \\
	&\leq \liminf_{n\to\infty}\,\h(x_n).
\end{align*}
This shows that $x_n \xrightharpoonup{\;d\;} W(z_1,z_2,s)$ for all 
$s\in [0,1]$. Therefore, the set $\Lambda_d(x_n)$ is convex.
	
To show that the set $\Lambda_d(x_n)$ is closed we need to recall that
every metric functional is $1-$Lipschitz. If $(z_m)_{m\geq 1}$ is a 
sequence of points in $\Lambda_d(x_n)$, then 
for every metric functional $\h$ and every $m\geq 1$ we have
$$
	\liminf_{n\to\infty}\,\h(x_n) \geq \h(z_m).
$$
So, if we have $d(z_m,z)\to 0$, then $z$ is in $\Lambda_d(x_n)$.
\end{proof}

	
\bibliographystyle{alpha}
\bibliography{dWeakConvergence.bib}

\begin{thebibliography}{DGJTG24}

\bibitem[AK22]{Avelin-Karlsson2022}
Benny Avelin and Anders Karlsson.
\newblock Deep limits and a cut-off phenomenon for neural networks.
\newblock {\em J. Mach. Learn. Res.}, 23(1), January 2022.

\bibitem[Ba{\v{c}}23]{Bacak2023}
Miroslav Ba{\v{c}}{\'a}k.
\newblock Old and new challenges in {Hadamard} spaces.
\newblock {\em Jpn. J. Math. (3)}, 18(2):115--168, 2023.

\bibitem[Bea82]{Beauzamy1982}
Bernard Beauzamy.
\newblock {\em Introduction to {Banach} spaces and their geometry}, volume~68
  of {\em North-Holland Math. Stud.}
\newblock Elsevier, Amsterdam, 1982.

\bibitem[CCAL24]{Chu-Cueto-Lemmens2024}
Cho-Ho Chu, Mar{\'{\i}}a Cueto-Avellaneda, and Bas Lemmens.
\newblock Horofunctions and metric compactification of noncompact {Hermitian}
  symmetric spaces.
\newblock {\em Ann. Mat. Pura Appl. (4)}, 203(4):1703--1751, 2024.

\bibitem[DGJTG24]{Daniilidis-et-al2024}
Aris Daniilidis, M.~Isabel Garrido, Jesus~A Jaramillo, and Sebastian
  Tapia-Garc{\'i}a.
\newblock {Horofunction extension and metric compactifications}, November 2024.
\newblock Preprint hal-04786684.

\bibitem[DL15]{Descombes-Lang2015}
Dominic Descombes and Urs Lang.
\newblock Convex geodesic bicombings and hyperbolicity.
\newblock {\em Geom. Dedicata}, 177:367--384, 2015.

\bibitem[EFL09]{Espinola-Fernandez2009}
Rafa Esp{\'{\i}}nola and Aurora Fern{\'a}ndez-Le{\'o}n.
\newblock {CAT}{{\((k)\)}}-spaces, weak convergence and fixed points.
\newblock {\em J. Math. Anal. Appl.}, 353(1):410--427, 2009.

\bibitem[GK20]{Gouezel-Karlsson2020}
S{\'e}bastien Gou{\"e}zel and Anders Karlsson.
\newblock Subadditive and multiplicative ergodic theorems.
\newblock {\em J. Eur. Math. Soc. (JEMS)}, 22(6):1893--1915, 2020.

\bibitem[GK21]{Gutierrez-Karlsson2021}
Armando~W. Guti{\'e}rrez and Anders Karlsson.
\newblock Comments on the cosmic convergence of nonexpansive maps.
\newblock {\em J. Fixed Point Theory Appl.}, 23(4):10, 2021.
\newblock Id/No 59.

\bibitem[GN24]{Gutierrez-Nevanlinna2024}
Armando~W. Guti{\'e}rrez and Olavi Nevanlinna.
\newblock Subinvariant metric functionals for nonexpansive mappings.
\newblock Preprint {arXiv}:2407.04234 [math.{FA}] (2024), 2024.

\bibitem[Gut19a]{GutierrezThesis}
Armando~W. Guti{\'e}rrez.
\newblock {\em Metric compactification of {Banach} spaces}.
\newblock Doctoral thesis, Aalto University, 2019.

\bibitem[Gut19b]{Gutierrez2019-2}
Armando~W. Guti{\'e}rrez.
\newblock On the metric compactification of infinite-dimensional {{\(\ell
  _p\)}} spaces.
\newblock {\em Can. Math. Bull.}, 62(3):491--507, 2019.

\bibitem[GV12]{Gaubert-Vigeral2012}
St{\'e}phane Gaubert and Guillaume Vigeral.
\newblock A maximin characterisation of the escape rate of non-expansive
  mappings in metrically convex spaces.
\newblock {\em Math. Proc. Camb. Philos. Soc.}, 152(2):341--363, 2012.

\bibitem[Kar21]{Karlsson2021}
Anders Karlsson.
\newblock Hahn-{Banach} for metric functionals and horofunctions.
\newblock {\em J. Funct. Anal.}, 281(2):17, 2021.
\newblock Id/No 109030.

\bibitem[Kar22]{Karlsson2022}
Anders Karlsson.
\newblock Elements of a metric spectral theory.
\newblock In {\em Dynamics, geometry, number theory. The impact of Margulis on
  modern mathematics}, pages 276--300. Chicago, IL: The University of Chicago
  Press, 2022.

\bibitem[Kar24]{Karlsson2023}
Anders Karlsson.
\newblock A metric fixed point theorem and some of its applications.
\newblock {\em Geom. Funct. Anal.}, 34(2):486--511, 2024.

\bibitem[KP08]{Kirk2008}
W.~A. Kirk and B.~Panyanak.
\newblock A concept of convergence in geodesic spaces.
\newblock {\em Nonlinear Anal., Theory Methods Appl., Ser. A, Theory Methods},
  68(12):3689--3696, 2008.

\bibitem[Lim77]{Lim1977}
Teck-Cheong Lim.
\newblock Remarks on some fixed point theorems.
\newblock {\em Proc. Am. Math. Soc.}, 60:179--182, 1977.

\bibitem[Lin22]{Lins2022}
Brian Lins.
\newblock Nonexpansive maps with surjective displacement.
\newblock {\em J. Fixed Point Theory Appl.}, 24(1):28, 2022.
\newblock Id/No 3.

\bibitem[LLN18]{Lemmens-Lins-Nussbaum2018}
Bas Lemmens, Brian Lins, and Roger Nussbaum.
\newblock Detecting fixed points of nonexpansive maps by illuminating the unit
  ball.
\newblock {\em Isr. J. Math.}, 224:231--262, 2018.

\bibitem[LP23]{Lytchak-Petrunin2023}
Alexander Lytchak and Anton Petrunin.
\newblock Weak topology on {CAT}(0) spaces.
\newblock {\em Isr. J. Math.}, 255(2):763--781, 2023.

\bibitem[Meg24]{Megrelishvili2025}
Michael Megrelishvili.
\newblock Lipschitz-{Free} {Spaces}: {A} {Topometric} {Approach} and {Group}
  {Actions}.
\newblock Preprint {arXiv}:2408.15208 [math.{FA}] (2024), 2024.

\bibitem[Rai63]{Rainwater1963}
John Rainwater.
\newblock Weak convergence of bounded sequences.
\newblock {\em Proc. Am. Math. Soc.}, 14:999, 1963.

\bibitem[Tak70]{Takahashi1970}
Wataru Takahashi.
\newblock A convexity in metric space and nonexpansive mappings. {I}.
\newblock {\em K{\=o}dai Math. Semin. Rep.}, 22:142--149, 1970.

\bibitem[Wal18]{Walsh2018}
Cormac Walsh.
\newblock Hilbert and {Thompson} geometries isometric to infinite-dimensional
  {Banach} spaces.
\newblock {\em Ann. Inst. Fourier}, 68(5):1831--1877, 2018.

\end{thebibliography}
\end{document}